\newtheorem{theorem}{Theorem}
\newtheorem{lemma}[theorem]{Lemma}
\newtheorem{proposition}[theorem]{Proposition}
\newtheorem{remark}{Remark}
\newtheorem{example}{Example}
\newtheorem{assumption}{Assumption}
\newcommand{\reeq}[1]{(\ref{eq.#1})}
\newcommand{\ha}{\frac{1}{2}}
\newcommand{\diag}  {\ensuremath {\mathrm{diag}}}
\newcommand{\ip}[2]{\langle{#1},{#2}\rangle}
\newcommand{\Lt}{  \mathcal{L}_2 }
\newcommand{\Ht}{ \mathcal{H}_{2}}
\newcommand{\Hinf}{ \mathcal{H}_{\infty}}
\newcommand{\R}{{\mathbb R}}
\newcommand{\tr}{\mbox{\rm Tr}}
\newcommand{\btmz}{\begin{itemize}}
\newcommand{\etmz}{\end{itemize}}
\newcommand{\benum}{\begin{enumerate}}
\newcommand{\eenum}{\end{enumerate}}
\newcommand{\bmat}[1]{\begin{bmatrix}#1\end{bmatrix}}
\newcommand{\stsp}[4]{
\left[ \begin{array}{c|c}
       #1 & #2 \\ \hline
       #3 & #4
       \end{array} \right]}
\newcommand{\enrique}[1]{\ifthenelse{\boolean{showcomments}}
{\textcolor{Red}{(Enrique says: #1)}}{}}
\newcommand{\addcite}[0]{\ifthenelse{\boolean{showcomments}}
{\textcolor{Purple}{(add cite(s)) }}{}}%
\newcommand{\highlight}[1]{\ifthenelse{\boolean{showcomments}}
{\textcolor{blue}{#1}}{#1}}
\title{\LARGE \bf Global performance metrics for synchronization of heterogeneously rated power systems:
The role of machine models and inertia}
\author{Fernando Paganini,~\IEEEmembership{Fellow,~IEEE}, and Enrique Mallada,~\IEEEmembership{Member,~IEEE}
\thanks{Fernando Paganini is with Universidad ORT Uruguay, Montevideo. e-mail: paganini@ort.edu.uy. Enrique Mallada is with Johns Hopkins University, Baltimore. e-mail: mallada@jhu.edu. This work was supported in part by IDB/MIEM-Uruguay, Project ATN/KF 13883 UR,  ANII-Uruguay, grant FSE\_1\_2016\_1\_131605,
and NSF, grants CNS 1544771, EPCN 1711188,  and AMPS 1736448.
}
}
\begin{document}

\maketitle

\begin{abstract}
A recent trend in control of power systems has sought to quantify the synchronization dynamics in terms of a global performance metric, compute it under very simplified assumptions, and use it to gain insight on the role of system parameters, in particular, inertia. In this paper, we wish to extend this approach to more realistic scenarios, by incorporating the heterogeneity of machine ratings, more complete machine models, and also to more closely map it to classical power engineering notions such as Nadir, Rate of Change of Frequency (RoCoF), and inter-area oscillations.

We consider the system response to a step change in power excitation, and define the system frequency as a weighted average of generator frequencies (with weights proportional to each machine's rating); we characterize Nadir and RoCoF by the $L_\infty$ norm of the system frequency and its derivative, respectively, and inter-areas oscillations by the $L_2$ norm of the error of the vector of bus frequencies w.r.t. the system frequency.

For machine models where the dynamic parameters (inertia, damping, etc.) are proportional to rating, we analytically compute these norms and use them to show that the role of inertia is more nuanced than in the conventional wisdom. With the classical swing dynamics, inertia constant plays a secondary role in performance. It is only when the turbine dynamics are introduced that the benefits of inertia become more prominent.

\end{abstract}

\section{Introduction}\label{sec.intro}

The synchronization performance of the power grid has been a major
concern of system operators since the early days~\cite{Evans:1924hr,Steinmetz:1920fg}.  Most
generators and loads are designed on the assumption that the grid
frequency is tightly regulated around a nominal value (e.g. 60Hz
in the U.S., 50Hz in Europe). When the frequency deviates
significantly (a few hundred mHz) due to some network fault,
several mechanisms, such as  machine protections or under frequency load shedding
(ULFS)~\cite{NERC:2015tc}, automatically
disconnect critical network elements, potentially causing
cascading failures and ultimately blackouts~\cite{FederalEnergyRegulatoryCommission:2012wq}.

The gradual substitution of conventional electromechanical
 with renewable generation has raised new concerns about
synchronization performance. The former provide a natural response
to power imbalances, which is not present in the inverter-based
interfaces of renewable sources; in particular the lack of
\emph{inertia} in the latter is often seen as a threat to
frequency regulation. An objective analysis of this issue requires
identifying appropriate performance \emph{metrics}.

Two separate mechanisms are at play in frequency fluctuations. On
one hand, abrupt changes on the global supply-demand balance
induce system-wide frequency changes, which may persist in steady
state. On the other hand, geographically-distributed frequency
oscillations (a.k.a. inter-area oscillations)~\cite{Hsu:di,
klein_fundamental_1991, messina_inter-area_2011} may be observed
due to weak global coupling. Good performance metrics should be
able to identify and discriminate between these two phenomena.

 To address these problems, power engineers have traditionally
relied on classical control metrics. For example, they use the
system response to a step input to measure the maximum frequency
deviation to imbalances (Nadir) as well as the maximum
rate of change of frequency (RoCoF)\cite{Miller:2011tm}; their main
limitation is that these quantities are node dependent. To
evaluate inter-area oscillations, eigenvalue methods (slow
coherency \cite{Winkelman:1981}, participation factors \cite{Verghese:1982}) have
been employed.

More recently, a trend in control of power systems has aimed to
quantify synchronization performance in terms of \emph{global}
system metrics such as $\mathcal{H}_2$ or $\mathcal{H}_\infty$
\cite{bassam,mevsanovic2016comparison,m2016cdc,poolla_dorfler2017,simpson-porco2017,andreasson2017,jpm2017cdc}.
These norms capture the effect of system
parameters, such as inertia, damping, and the eigenvalues of the
network (Laplacian) matrix, on system performance. However,
closed-form analytic results depend on oversimplified assumptions
--homogeneous machines modeled by swing equations-- and do not
directly represent step-response information which is most
important for network operators to analyze disturbances.

In this paper, we wish to bridge the gap between the two
approaches. Firstly, to extend the latter approach to cover
more realistic scenarios  by incorporating heterogeneity of
machine ratings, and adding turbine dynamics. Secondly, to
incorporate step-response metrics (Nadir, RoCoF) for an
appropriately defined global system frequency, and to separately
characterize inter-area oscillations.

To obtain analytically tractable results, we focus on a
specifically family of heterogeneous machines, in which key
dynamic parameters are proportional to rating. This restriction is
mild in comparison with homogeneity, and enables a diagonalization
procedure, generalizing traditional eigen-analysis. From it, a
\emph{system frequency} suitable for step response analysis
appears naturally, and turns out to be
\begin{align}\label{eq.coi}
\bar w(t) :=  \frac{\sum_i m_i w_i(t)}{\sum_i m_i},
\end{align}
the weighted average of  node frequencies in proportion to their
inertia. This is identical to the frequency of the center of  inertia
(COI) a classical notion \cite{kundur_power_1994,bergen_vittal_2000}. Nadir and RoCoF are defined as
the $\mathcal{L}_\infty$ norms of, respectively, $\bar{w}(t)$ and
$\dot{\bar w}(t)$. A synchronization cost measuring transient
inter-area oscillations is defined as the $\Lt$ norm of the vector
of deviations $\tilde w_i(t) = w_i(t) -\bar w(t)$.

The rest of the paper is organized as follows. In Section
\ref{sec.diag} we formulate the model and carry out the
diagonalization of the dynamics, making the above decomposition
precise. The system frequency step response, and a closed form
expression for the synchronization cost, are both expressed in
terms of a representative machine and the network structure.

In Sections \ref{sec.swing} and \ref{sec.turbine} we apply the
results to different machine models, respectively to the
second-order swing dynamics, and a third-order model that
incorporates the turbine control. We find that some important
aspects, in particular the importance of inertia, depend crucially
on the chosen model. The paper concludes with a discussion in
Section \ref{sec.concl}, and some derivations are covered in the
Appendix.

\section{Dynamic model with machine heterogeneity}\label{sec.diag}

\begin{figure}[htb]
\centering
\includegraphics[width=.75\columnwidth]{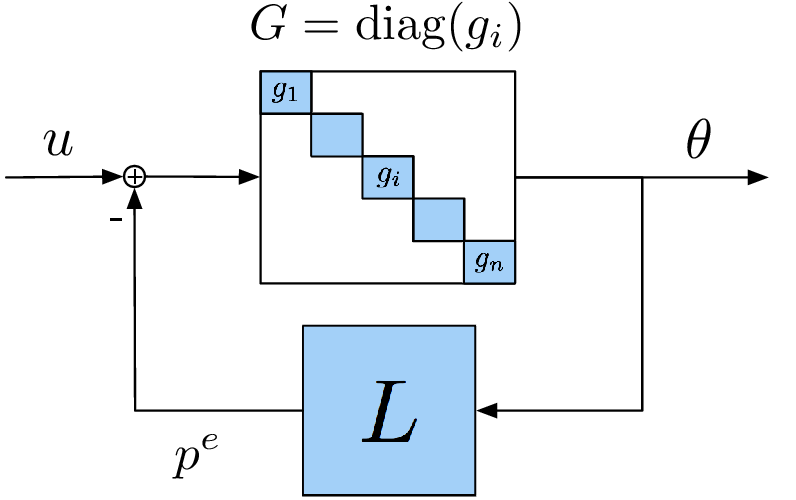}
\caption{Block Diagram of Linearized Power Network}\label{fig.GL}
\end{figure}

We consider a set of $n$ generator buses, indexed by $i\in\{1,\dots,n\}$, dynamically coupled through an AC network.
Assuming operation around an equilibrium, the linearized dynamics are represented by the
block diagram in Fig. \ref{fig.GL}, where:
\btmz
\item $G(s) = \diag(g_i(s))$ is the diagonal transfer function of generators at each bus.
Each $g_i(s)$ has as output the phase angle $\theta_i$, and as
input the net power at its generator axis, relative to its
equilibrium value. This includes an outside disturbance $u_i$,
reflecting variations in mechanical power or local load, minus the
variation $p^e_i$ in electrical power drawn from the network.
\item Using a linear DC model for the network, the vector of drawn power is written as $p^e = L \theta$, where
$L$ is the weighted Laplacian of the graph defined by the line
susceptances. Thus all the coupling between the bus subsystems is through this
feedback term. $L$ is a rank $n-1$ matrix with kernel spanned by
$\mathbf{1}$, the vector of all ones.  \etmz
Two examples of generator dynamics, to be
considered explicitly in this paper, are:
\begin{example}\label{ex.swing}
The swing equation dynamics
\begin{align*}
\dot{\theta_i} & = w_i, \\
m_i \dot{w_i} &= - d_i w_i + u_i - p^e_i.
\end{align*}
This corresponds to the transfer function
\begin{align}\label{eq.giswing}
g_i(s) = \frac{1}{m_i s^2 + d_i s}.
\end{align}
\end{example}
\mbox{}

\begin{example}\label{ex.turbine}
The swing equation with a first-order model of the turbine control:
\begin{align*}
\dot\theta_i &= w_i,\\
m_i\dot w_i &= - d_i w_i + q_i + u_i - p^e_i,\\
\tau_i \dot q_i &= -r^{-1}_i w_i -q_i.
\end{align*}
Here $q_i$ is the (variation of) turbine power, $\tau_i$ the turbine time constant and $r_i$ the droop coefficient; the governor
dynamics are considered to be faster and neglected. The corresponding transfer function is
\begin{align}\label{eq.giturbine}
g_i(s) = \frac{\tau_i s + 1}{s \left(m_i \tau_i s^2 + (m_i+d_i
\tau_i) s+d_i +r_i^{-1}\right)}.
\end{align}
\end{example}
\mbox{}

Of course, other models are possible within this framework (e.g. a
4th order system including a state for the governors).

\subsection{A family of heterogeneous machines}

A popular research topic in recent years
\cite{bassam,mevsanovic2016comparison,m2016cdc,poolla_dorfler2017,simpson-porco2017,andreasson2017,jpm2017cdc} has
been the application of global metrics from robust control to this
kind of synchronization dynamics, as a tool to shed light on the
role of various parameters, e.g. system inertia. Most of the analytical
results, however, consider a \emph{homogeneous} network where all
machines are identical (i.e., common $m_i$, $d_i$, etc.), a very
restrictive scenario.\footnote{Some bounds on heterogenous systems
are given in \cite{bassam,poolla_dorfler2017}. Numerical studies with heterogeneity are given in \cite{mevsanovic2016comparison}.}

In a real network, where generators have different power
{ratings}, it is natural  for parameters to scale accordingly:
for instance, the inertia $m_i$ of a machine will
grow with its rating, and it is clear that ``heavier" machines
will have a more significant impact in the overall dynamics.

While in principle one would like to cover general parameters, we
show here that a compact analysis can be given for the case where
parameters satisfy a certain proportionality. We formalize this
by introducing a  \emph{rating parameter} $0< f_i\leq 1$, defined in
relation to the largest machine which has $f_i=1$, and imposing the
following:
\begin{assumption}\label{ass.scale}
There exists a fixed transfer function $g_0(s)$, termed the \emph{representative machine}, such that
\[
g_i(s) = \frac{1}{f_i} g_0(s)
\]
for each $i$, where $f_i > 0$ is the rating parameter of bus $i$.
\end{assumption}

To interpret this, consider first the swing dynamics of Example
\ref{ex.swing}. Then the assumption is satisfied provided
that inertia and damping are both proportional\footnote{This kind of proportionality is termed ``uniform damping" in \cite{bergen_vittal_2000}.} to $f_i$, i.e. $m_i = f_i  m$, $d_i = f_i d$,
where $m$, $d$ are those of the largest (representative) machine,
\[
g_0(s) = \frac{1}{ms^2 +ds}. \]

Going to the case of Example \ref{ex.turbine} with the turbine
dynamics, we find that the above assumption is satisfied provided
that $m_i = f_i  m$, $d_i = f_i d$, $r_i^{-1} = f_i  r^{-1}$,
$\tau_i = \tau$; here the inverse droop coefficient is assumed
proportional to rating, but the turbine time-constant is taken to
be homogeneous.

The corresponding representative machine is
\[
g_0(s) = \frac{\tau s + 1}{s \left(m \tau s^2 + (m+d \tau) s+d
+r^{-1}\right)}. \]

Regarding the practical relevance of our simplifying assumption:
empirical values reported in \cite{oakridge2013} indicate that at least in
regard to orders of magnitude, proportionality is a reasonable
first-cut approximation to heterogeneity, substantially more
realistic than the homogeneous counterpart.

\subsection{Diagonalization}

We will now exploit the above assumption to transform the dynamics
of Fig. \ref{fig.GL} in a manner that allows for a suitable
decoupling in the analysis. In what follows, $F = \diag(f_i)$
denotes the diagonal matrix of rating parameters. Writing
\[
G(s) = \diag(g_i(s)) = F^{-\ha} [g_0(s) I] F^{-\ha},
\]
we transform the feedback loop into the equivalent form of Fig. \ref{fig.LF}.

\begin{figure}[htb]
\centering
\includegraphics[width=1\columnwidth]{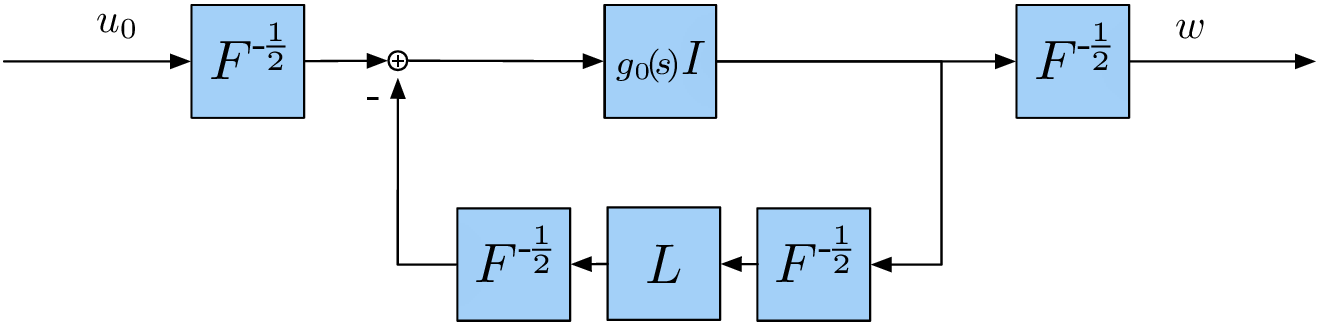}
\caption{Equivalent block diagram for heterogeneously rated machines}\label{fig.LF}
\end{figure}

We introduce a notation for the scaled Laplacian matrix\footnote{This scaling already appears in the classical paper \cite{Winkelman:1981}.}
\begin{align}\label{eq.lf}
L_F:=  F^{-\ha} L F^{-\ha},
\end{align}
which is positive semidefinite and of rank $n-1$. Applying the spectral theorem we diagonalize it as
\begin{align}\label{eq.lfdiag}
L_F= V \Lambda V^T,
\end{align}
where $\Lambda = \diag(\lambda_k), \quad 0 = \lambda_0 <
\lambda_1\leq \cdots \leq \lambda_{n-1},$  and $V$ is unitary.
Distinguishing the eigenvector $v_0$ that corresponds to the zero
eigenvalue, we write $V=\left[v_0 \ \  V_\perp\right],$ where
\[
V_\perp \in \R^{n\times (n-1)}, \ V_\perp^T V_\perp = I_{n-1}, \
V_\perp^T v_0 = 0.\]
In fact $v_0$ can be made explicit by
recalling that $\ker(L) = \mathrm{span}\{\mathbf{1}\}$, so $\ker(L_F) =
\mathrm{span}\{F^{\ha} \mathbf{1}\}$, from where
\begin{align}\label{eq.v1}
v_0 = \alpha_F  F^\ha  \mathbf{1}, \quad \mbox{ with } \alpha_F:= \Big(\sum_{i} f_i \Big)^{-\ha}.
\end{align}

Substitution of \reeq{lfdiag} into Fig. \ref{fig.LF} and some
block manipulations leads to the equivalent representation of
Fig. \ref{fig.penult}.

\begin{figure}[hbt]
\centering
\includegraphics[width=1\columnwidth]{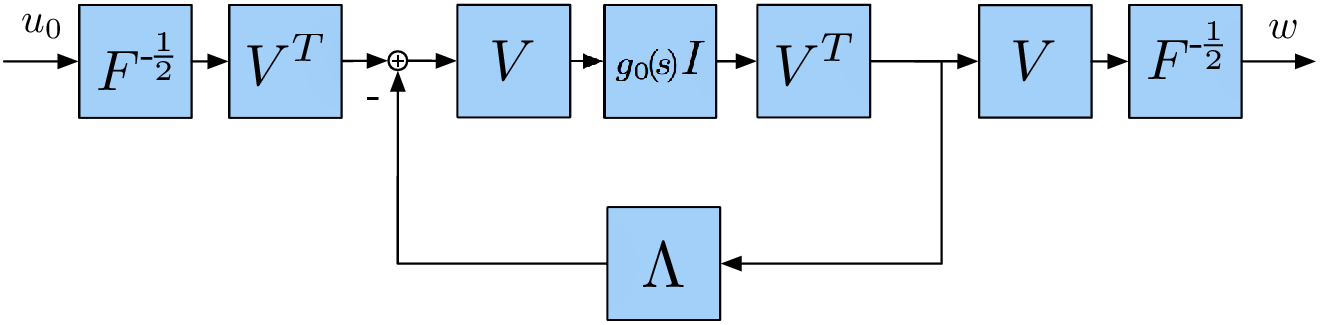}
\caption{Equivalent block diagram for heterogeneously rated machines with diagonalized closed loop}\label{fig.penult}
\end{figure}

Noting finally that the $V$ block commutes with $g_0(s)I$ and thus
cancels out with $V^T$, the internal loop is now fully
diagonalized, yielding the closed-loop transfer function

\begin{align}\label{eq.h}
H(s) &= \diag(h_k(s)), \quad \mbox{with} \nonumber\\
h_k(s) &= \frac{g_0(s)}{1 + \lambda_k g_0(s)}, \quad
 k=0,1,\ldots,n-1.
\end{align}

\begin{assumption}\label{ass.stable}
The proportional feedback $\lambda_k>0 , k=1, \ldots, n-1$ is
stabilizing for $g_0(s)$, i.e. $h_k(s)$ has all its poles in
$Re[s]<0$ for $k=1,\ldots, n-1$.
\end{assumption}

Later on we will verify that the assumption always holds for the
two examples considered.

 As for
$h_0(s)$, we see that here there is no feedback: $h_0(s) =
g_0(s)$. In the examples above, $g_0(s)$ has a pole at $s=0$,
which corresponds to the integration from frequency to angle. This
is the typical situation in which the \emph{local} control  of the
machine has no angle feedback, the latter only appears when
considering coupling through the network.

Returning to Fig. \ref{fig.penult} we arrive at the transfer function
\[
T_{\theta u}(s) = F^{-\ha} V  H(s) V^T  F^{-\ha}
\]
between the vector of external power disturbances and the machine angle outputs.
If the output of interest is chosen to be the vector of frequencies, the
relevant transfer function is
\[
T_{w u}(s) = s T_{\theta u}(s).
\]

\subsection{Step response characterization}

Global metrics for synchronization performance  in e.g. \cite{bassam,poolla_dorfler2017,simpson-porco2017},
 are system norms ($\Ht$, $\Hinf$) applied to $T_{wu}$ (frequency output), or to a
``phase coherency"  output based on differences in
output angles. The choice of metric carries an implicit assumption
on the power disturbances considered (white noise, or a worst-case
$\Lt$ signal).

In this paper, we wish to bring metrics closer to industry
practice, by considering a \emph{step} input disturbance (e.g. due
a fault), and analyzing the response of the vector of bus
frequencies $w_i(t)$. Practitioners are interested in some
time-domain performance metrics (Nadir, RoCoF, see below) for the
response, but transient frequency is bus-dependent.

A candidate global notion of \emph{system frequency} is the weighted
average $\bar{w}(t)$ in \reeq{coi}. We now show that for our family of heterogenous systems, the
behavior of $\bar{w}(t)$ decouples nicely from the individual bus
deviations $w_i(t)- \bar{w}(t)$, opening the door for a separate
analysis of both aspects.

Our input will be a step function $u(t) = u_0 \mathds {1}_{t\geq
0}$; here $u_0$ is a given vector direction. In Laplace
transforms:
\begin{align}\label{eq.stepresp}
w(s) = T_{w u}(s) \frac{1}{s} u_0 = F^{-\ha} V  H(s) V^T  F^{-\ha} u_0.
\end{align}
We now isolate the dynamics corresponding to the eigenvalue
$\lambda_0=0$ and its eigenvector $v_0$ from the rest:
\begin{align*}
w(s) =  F^{-\ha} v_0  h_0(s) v_0^T  F^{-\ha} u_0 +  F^{-\ha}
V_\perp  \tilde{H}(s) V_\perp^T  F^{-\ha} u_0;
\end{align*}
here $\tilde{H}(s) = \diag_{k=1,..,n-1}\left(h_k(s))\right)$.

Noting that $F^{-\ha} v_0 =\alpha_F  \mathbf{1}$ from \reeq{v1},
and $h_0(s)=g_0(s)$, the first term above is of the form
$\bar{w}(s)\mathbf{1}$, where
\begin{align}\label{eq.wbar}
\bar{w}(s) : = \alpha_F^2  \mathbf{1}^T u_0 g_0(s) =  \frac{\sum_i u_{0i}}{\sum_i f_i} g_0(s).
\end{align}
The second term will be denoted by $\tilde{w}(s)$; by Assumption \ref{ass.stable} it has left half-plane poles.
So we have obtained the decomposition
\begin{align}\label{eq.wdecomp}
w(t) = \bar{w}(t) \mathbf{1}+\tilde{w}(t),
\end{align}
intepreted as follows:
\btmz
\item $\bar{w}(t)$ is a \emph{system frequency} term, applied to all nodes;
\item the transient term $\tilde{w}(t)$ represents the individual node deviations from the synchronous response.
\etmz

\subsection{System frequency}

We can obtain more information on the system frequency by observing that since
$\mathbf{1}^T F^\ha V_\perp  = \alpha_F^{-1} v_0^T V_\perp = 0$, we have
\[
(\mathbf{1}^T F) \underbrace{F^{-\ha} V_\perp  \tilde{H}(s)
V_\perp^T  F^{-\ha} u_0}_{\tilde{w}(s)} \equiv 0.
\]
Therefore $ \mathbf{1}^T F w(t) =  \bar{w}(t) \mathbf{1}^T F
\mathbf{1}$ by \reeq{wdecomp}, which gives
\[
\bar{w}(t) = \frac{\sum_i f_i w_i(t)}{\sum_i f_i};
\]
the system frequency is a weighted mean of bus frequencies, in
proportion to their rating. Noting that $m_i = m f_i$, it follows
that $\bar{w}(t)$ is exactly the COI frequency from \reeq{coi}.

Also, returning to \reeq{wbar} we have
\begin{align}\label{eq.wbart}
\bar{w}(t)  = \frac{\sum_i u_{0i}}{\sum_i f_i} g_0(t).
\end{align}
Recall that $g_0(t)$ is the angle impulse response of the
representative machine, or equivalently its angular frequency step
response. Thus $\bar{w}(t)$ corresponds to the frequency observed when exciting the
representative machine (in open loop) with the total system
disturbance normalized by the total scale.

\begin{remark}
Note that this result is \emph{independent of $L$}, i.e. the electrical
network does not affect the time response of the system frequency,
only the machine ratings themselves. Thus, when the network dependent term ($\tilde w$) converges fast to zero,  \eqref{eq.wbart} is a natural candidate for a reduced order model similar to the ones recently considered in~\cite{guggilam2017engineering, apostolopoulou2016balancing}.
\end{remark}


In the following sections we analyze its behavior for the
previously discussed examples.

\subsection{Quantifying the deviation from synchrony}

We now turn our attention to the term $\tilde{w}(t)$ which represents individual bus deviations from a synchronous response. A natural way of quantifying the size of this transient term is through the $\Lt$ norm
\[
\|\tilde{w}\|_2^2 = \int_0^\infty |\tilde{w}(t)|^2 dt.
\]
We now show how this norm can be computed in terms of the parameters of the scaled network Laplacian,
and the impulse response matrix $\tilde{H}(t)=
\diag_{k=1,\ldots,n-1} (h_k(t))$,
Laplace inverse of $\tilde{H}(s)$, which encapsulates all information on the machine model.

\begin{proposition} \label{prop.wtil}
$\|\tilde{w}\|_2^2 = z_0^T Y z_0$, where: \btmz
\item ${Y} \in \R^{(n-1)\times (n-1)}$ is the matrix with elements
\begin{align}\label{eq.defy}
y_{kl} = \gamma_{kl}\ip{h_k}{h_l} &= \gamma_{kl} \int_0^\infty
h_k(t) h_l(t)dt, \\
\mbox{where  }\Gamma = (\gamma_{kl})&:= V_\perp^T F^{-1} V_\perp;
\label{eq.defgamma}
\end{align}
\item $z_0 := V_\perp^T F^{-\ha}u_0 \in \R^{n-1}$.
\etmz
\end{proposition}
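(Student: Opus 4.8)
The plan is to work directly in the time domain and reduce the integral $\|\tilde{w}\|_2^2 = \int_0^\infty \tilde w(t)^T \tilde w(t)\,dt$ (the bus frequencies being real-valued, so $|\tilde w|^2 = \tilde w^T \tilde w$) to a quadratic form in the fixed vector $z_0$. Taking the inverse Laplace transform of the expression for $\tilde w(s)$ stated just before the proposition, and using that $V_\perp$, $F^{-\ha}$ and $u_0$ are constant, gives
\[
\tilde w(t) = F^{-\ha} V_\perp\, \tilde H(t)\, V_\perp^T F^{-\ha} u_0,
\]
where $\tilde H(t) = \diag_{k=1,\ldots,n-1}(h_k(t))$ is the impulse-response matrix. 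First I would form the pointwise product $\tilde w(t)^T \tilde w(t)$, using that $F^{-\ha}$ is symmetric (since $F$ is diagonal) and that $\tilde H(t)$ is diagonal, hence symmetric as well.

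Next I would collapse the outer and inner factors. The two outermost blocks are precisely $V_\perp^T F^{-\ha} u_0 = z_0$ and its transpose $z_0^T$, by the definition of $z_0$. The two inner blocks that meet in the middle combine into $V_\perp^T F^{-\ha} F^{-\ha} V_\perp = V_\perp^T F^{-1} V_\perp = \Gamma$, by \reeq{defgamma}. This leaves the identity
\[
\tilde w(t)^T \tilde w(t) = z_0^T\, \tilde H(t)\, \Gamma\, \tilde H(t)\, z_0,
\]
valid for each $t$.

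Finally, since $z_0$ is constant I would pull it outside the integral to obtain $\|\tilde w\|_2^2 = z_0^T Y z_0$ with $Y := \int_0^\infty \tilde H(t)\, \Gamma\, \tilde H(t)\,dt$. The step requiring care is identifying the entries of $Y$: because $\tilde H(t)$ is diagonal with $(k,k)$ entry $h_k(t)$, the sandwich $\tilde H(t)\,\Gamma\,\tilde H(t)$ acts entrywise on $\Gamma$, producing $\big(\tilde H(t)\,\Gamma\,\tilde H(t)\big)_{kl} = h_k(t)\,\gamma_{kl}\,h_l(t)$. Integrating term by term and drawing the constant $\gamma_{kl}$ out of the integral yields $y_{kl} = \gamma_{kl}\int_0^\infty h_k(t)h_l(t)\,dt = \gamma_{kl}\ip{h_k}{h_l}$, which matches \reeq{defy}. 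I expect the only genuine obstacle to be the bookkeeping: keeping track of the transposes and confirming that the diagonal structure of $\tilde H$ turns the triple product into this Hadamard-type weighting of $\Gamma$. Convergence of the integrals defining $Y$ is assured by Assumption \ref{ass.stable}, which places all poles of $h_k$, $k = 1,\ldots,n-1$, in the open left half-plane, so each $h_k \in \Lt$.
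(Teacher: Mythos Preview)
Your proposal is correct and follows essentially the same route as the paper's own proof: write $\tilde w(t)=F^{-\ha}V_\perp\tilde H(t)z_0$, compute $\tilde w(t)^T\tilde w(t)=z_0^T\tilde H(t)\Gamma\tilde H(t)z_0$, observe that the diagonal structure of $\tilde H(t)$ gives entries $h_k(t)\gamma_{kl}h_l(t)$, and integrate. You spell out a few more details (the transpose bookkeeping, the Hadamard-type identification, and the convergence via Assumption~\ref{ass.stable}), but there is no substantive difference.
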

\begin{proof}
With the introduced notation we have
\[
\tilde{w}(t) =F^{-\ha} V_\perp  \tilde{H}(t) z_0,
\]
therefore $\tilde{w}(t)^T \tilde{w}(t)= z_0^T  \tilde{H}(t)\Gamma
\tilde{H}(t)z_0.$ The matrix in the above quadratic form has
elements $h_k(t) \gamma_{kl} h_l(t)$, therefore integration in
time yields the result.
\end{proof}

\begin{remark} The metric $\|\tilde{w}\|_2^2$ \emph{does} depend on the
electrical network, through the eigenvalues and eigenvectors of
$L_F$.
\end{remark}

\subsection{Mean synchronization cost for random disturbance
step}

Since the cost discussed above is a function of the disturbance
step $u_0$, it may be useful to find its average over a random
choice of this excitation. Recalling that the components $u_{0i}$
correspond to different buses, it is natural to assume them to be
independent, and thus $ E[u_0 u_0^T] = \Sigma^u$, a diagonal
matrix. Therefore
\[
E[z_0 z_0^T] =  V_\perp^T F^{-1} \Sigma^u V_\perp =:\Sigma^z,
\]
and the expectation for the cost in Proposition \ref{prop.wtil} is
\[
E\left[\|\tilde{w}\|_2^2\right] = E[z_0^T Y z_0] = E[\tr(Y
z_0z_0^T) ] = \tr(Y \Sigma^z).
\]
We look at some special cases: \btmz
\item $\Sigma_u = I$ (uniform disturbances). Then $\Sigma^z = \Gamma$,
and
\[E\left[\|\tilde{w}\|_2^2\right] = \tr(Y\Gamma) = \sum_{k,l} \gamma_{kl}^2 \ip{h_k}{h_l}.\]
\item $\Sigma_u = F$. This means disturbance size follows the square root of the bus rating. Here $\Sigma^z = I$, and
\[E\left[\|\tilde{w}\|_2^2\right] = \tr(Y) = \sum_{k} \gamma_{kk} \|h_k\|_2^2.\]
\item $\Sigma_u = F^2$. This is probably most natural, with disturbances proportional to bus rating. Here $\Sigma^z =  V_\perp^T F V_\perp =
\Gamma^\dagger $ (pseudoinverse); $E\left[\|\tilde{w}\|_2^2\right]
= \tr(Y \Gamma^\dagger)$.
 \etmz

\subsection{The homogeneous case}\label{ssec.homog}

If all machines have the same response $g_0(s)$, setting $F=I$ we
can obtain some simplifications: \btmz
\item $\{\lambda_k\}$ are the eigenvalues of the original
Laplacian $L$.
\item The system frequency is the average $\bar{w}(t)= \frac{1}{n} \sum_i w_i(t)$, and
satisfies
\[
\bar{w}(t)= \frac{1}{n} \left(\sum_i u_{0i}\right) g_0(t).
\]
\item $z_0= V_\perp^T u_0$, and $\Gamma = V_\perp^T  V_\perp = I$. Therefore the matrix $Y$ in
Proposition \ref{prop.wtil} is diagonal, $Y = \diag(\|h_k\|^2)$,
and
\begin{align}\label{eq.wtilhomog}
 \|\tilde{w}\|_2^2 =
\sum_{k=1}^{n-1} (z_{0k})^2 \|h_k\|^2, \end{align} where $z_{0k}=
v_k^T u_0$ is the projection of the excitation vector $u_0$ in the
direction of the $k$-th eigenvector of the Laplacian $L$.
\item The mean synchronization cost for $E[u_0 u_0^T] = I$ (here
all the preceding cases coincide) is
\begin{align}\label{eq.h2norm}
E\left[\|\tilde{w}\|_2^2\right] = \sum_{k=1}^{n-1} \|h_k\|_2^2 =
\|\tilde{H}\|_{\Ht}^2,
\end{align}
the $\Ht$ norm of the transfer function $\tilde{H}(s)$. We recall
that this was obtained by isolating the portion $g_0(s)$
corresponding to the synchronized response (in this case,
projecting onto $\mathbf{1}^\perp$). In this form, the cost
resembles other proposals \cite{bassam,simpson-porco2017}, for the price of synchrony, and \cite{andreasson2017} for the evaluation of the synchronization cost under step changes in homogeneous systems.
\etmz

\section{Application to the swing dynamics}\label{sec.swing}

In this section we assume we are in the situation of Example \ref{ex.swing}, i.e., the
representative machine is
\[
g_0(s) = \frac{1}{s(ms+d)}.
\]
and $m_i = f_i m$, $d_i = f_i d$ are the individual bus
parameters. The corresponding closed loop transfer functions in
\reeq{h} are
\begin{align}\label{eq.hkswing}
h_k(s) &= \frac{1}{ms^2 + ds + \lambda_k}, \quad
 k=0,1,\ldots,n-1.
\end{align}
Note they are stable when $\lambda_k>0$.

\subsection{System frequency}
Inverting the transform and invoking \reeq{wbart} we find that
\begin{align}
\bar{w}(t)  &= \frac{\sum_i u_{0i}}{\sum_i f_i}  \overbrace{\frac{1}{d} \left( 1 - e^{-\frac{d}{m} t}\right)}^{g_0(t)} \\
& = \frac{\sum_i u_{0i}}{\sum_i d_i}  \left( 1 - e^{-\frac{d}{m} t}\right), \quad t>0.
\end{align}
Some comments are in order:
\btmz
\item Again, we recall $\bar{w}(t)$ does not depend on the electrical network.
\item The first-order evolution of $\bar{w}(t)$ implies there is no overshoot; system frequency never deviates to a ``Nadir" further from equilibrium than its steady-state value.
\item The asymptotic frequency $w_\infty = \frac{\sum_i u_{0i}}{\sum_i d_i}$ is the ratio of total disturbance to total damping, but
does not depend on the inertia $m$. The latter only affects the time constant in which this asymptote is achieved.
\item The maximum RoCoF (rate-of-change-of frequency) occurs at $t\to 0+$, and is given by
\begin{align}\label{eq.rocofswing}
\frac{d}{m} \frac{\sum_i u_{0i}}{\sum_i d_i} =  \frac{\sum_i u_{0i}}{\sum_i m_i};
\end{align}
here the total inertia appears, which is natural in a second-order
response to a step in forcing. RoCoF increases for low inertia,
however it need not have a detrimental impact: system frequency
initially varies quickly but never deviates more than $w_\infty$,
independent of $m$. \etmz

\subsection{Synchronization cost}

We now turn to the synchronization cost $\|\tilde{w}\|_2$, which
can be computed by particularizing the result in Proposition
\ref{prop.wtil}. The following result is proved in Appendix \ref{app.ip}.

\begin{proposition}\label{prop.ipswing}
Let $h_k(s)$ be given in  \reeq{hkswing}, and $h_k(t)$ its inverse
transform, for $k=1,\ldots,n-1$. Then:
\begin{align}\label{eq.hipswing}
\ip{h_k}{h_l} =\frac{2 d}{m(\lambda_k - \lambda_l)^2 + 2
(\lambda_k + \lambda_l)d^2}.
\end{align}
\end{proposition}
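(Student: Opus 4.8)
The plan is to recognize $\ip{h_k}{h_l}$ as an $\Ht$-type inner product of two stable, strictly proper transfer functions and to evaluate it in the frequency domain via Plancherel's theorem. Starting from $h_k(s)=\frac{1}{ms^2+ds+\lambda_k}$ as in \reeq{hkswing}, and using that $h_l(t)$ is real, I would write
\[
\ip{h_k}{h_l}=\frac{1}{2\pi j}\int_{-j\infty}^{j\infty} h_k(s)\,h_l(-s)\,ds
\]
and compute this contour integral by residues. Because $h_k(s)h_l(-s)$ decays like $|s|^{-4}$, I can close the contour in the left half-plane; the poles of $h_l(-s)$ all lie in the right half-plane (Assumption \ref{ass.stable} makes $h_l$ stable), so only the two poles $s_{1,2}=\frac{-d\pm\sqrt{d^2-4m\lambda_k}}{2m}$ of $h_k(s)$ contribute.

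The key computational step is the residue evaluation. At each simple pole $s_i$ the residue of $h_k$ is $1/(2ms_i+d)=\pm 1/\sqrt{d^2-4m\lambda_k}$, while the remaining factor $h_l(-s_i)=1/(ms_i^2-ds_i+\lambda_l)$ simplifies through the characteristic relation $ms_i^2+ds_i+\lambda_k=0$, which converts it into $ms_i^2-ds_i+\lambda_l=(\lambda_l-\lambda_k)-2ds_i$. Substituting the explicit $s_i$, summing the two residues over a common denominator, and clearing fractions collapses the surds, since they enter only through $\Delta_k^2=d^2-4m\lambda_k$; the result is the claimed $\frac{2d}{m(\lambda_k-\lambda_l)^2+2(\lambda_k+\lambda_l)d^2}$. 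As a sanity check, setting $\lambda_k=\lambda_l$ recovers the single-system value $\|h_k\|_2^2=\frac{1}{2d\lambda_k}$.

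The main obstacle is purely bookkeeping: carrying $\sqrt{d^2-4m\lambda_k}$ through the residue sum and verifying that it cancels so the answer is real and symmetric in $k,l$, while also handling the over-, critically-, and under-damped pole configurations uniformly (the critically damped, repeated-pole case following by continuity of the final rational expression). To avoid this algebra with square roots entirely, I would more likely run the computation through a generalized Lyapunov (Sylvester) equation: realizing $h_k=C(sI-A_k)^{-1}B$ in controllable canonical form, one has $\ip{h_k}{h_l}=CXC^T$ with $A_kX+XA_l^T=-BB^T$. This reduces to a $2\times 2$ linear system whose $(1,1)$ entry is exactly the desired quantity, obtained without factoring either quadratic or introducing any surd.
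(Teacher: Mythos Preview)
Your proposal is correct, and in fact the Sylvester-equation route you sketch at the end is exactly the paper's proof: they take a controllable canonical realization of $g_0$, set $A_k=A-\lambda_k BC$, solve the $2\times 2$ Sylvester equation $A_kQ_{kl}+Q_{kl}A_l^T+BB^T=0$ explicitly, and read off $\ip{h_k}{h_l}=CQ_{kl}C^T$ from the $(1,1)$ entry. Your primary residue/Plancherel computation is a legitimate alternative that bypasses state-space machinery at the cost of the surd bookkeeping you already flag; it gives nothing extra here, but it is self-contained and does not require knowing that the Sylvester equation has a unique solution.
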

\mbox{}

It follows that the matrix $Y$ in \reeq{defy} will depend on both
inertia $m$ and damping $d$, so in general both have an impact on
the ``price of synchrony". Note however that inertia only appears in
off-diagonal terms, and the matrix remains bounded as $m\to 0$ or
$m\to \infty$; we thus argue that inertia has limited impact.
We look at this issue in further detail.

\subsubsection{Homogeneous case}
In the case of homogeneous machines, we saw above that $\Gamma=I$
and $Y$ is diagonal, so inertia disappears completely: indeed
using \reeq{wtilhomog} we have
\begin{align}\label{eq.cost2homo}
\|\tilde{w}\|_2^2 = \sum_{k=1}^{n-1} \frac{(v_k^T
u_{0})^2}{2d\lambda_k}.
\end{align}

The cost is inversely proportional to damping, and the direction
of the disturbance $u_0$ also matters. Recalling that $v_{k}$ is
the $k$-th Laplacian eigenvector, the worst-case for a given
magnitude $|u_0|$ is when it is aligned to $v_1$, the Fiedler
eigenvector.

If the disturbance direction is chosen randomly as in Section
\ref{ssec.homog}, then \reeq{h2norm} gives
\begin{align}\label{eq.cost2homorandom}
E\left[\|\tilde{w}\|_2^2\right] = \sum_{k} \|h_k\|_2^2 =
\frac{1}{2 d}\sum_{k}\frac{1}{\lambda_k} = \frac{1}{2 d}
\tr(L^\dagger);
\end{align}
again a similar result to those in \cite{bassam} for homogeneous
systems.

\subsubsection{Heterogeneous, high inertia case}
Assume for this discussion that all the $\lambda_k$ are distinct;
then as $m\to \infty$ we have $y_{kl} \to 0$ for $k\neq l$, so $Y$
again becomes diagonal, and the cost has the limiting expression
\begin{align}\label{eq.cost2heterohigh}
\|\tilde{w}\|_2^2  \stackrel{m\to
\infty}{\longrightarrow}\sum_{k=1}^{n-1} \frac{\gamma_{kk}z_{0k}^2
}{2 d \lambda_k}.
\end{align}
So the high inertia behavior is of a similar structure to the
homogeneous case in \reeq{cost2homo}. Comparisons are not
straightforward, though, since the scaling factor $F$ affects
$z_{0k}, \gamma_{kk}$ and $\lambda_k$ in each of the above terms.

\subsubsection{Heterogeneous, low inertia case}
If $m\to 0$, then the limiting $Y$ matrix is not diagonal. The
corresponding limiting cost is
\begin{align}\label{eq.cost2heterolow}
\|\tilde{w}\|_2^2 \stackrel{m\to 0+}{\longrightarrow}
\sum_{k,l=1}^{n-1} \frac{ \gamma_{kl}z_{0k}z_{0l}}{d (\lambda_k +
\lambda_l)}.
\end{align}
Note, however,  that the diagonal terms are the same as in the
high inertia case. This suggests that inertia plays a limited role
in the $\Lt$ price of synchrony, even in the heterogeneous machine
case.

\section{Model with turbine dynamics} \label{sec.turbine}

We now turn to the model of Example \ref{ex.turbine}, where the
representative machine is
\begin{equation}\label{eq:g_0s-turbine}
g_0(s) = \frac{\tau s + 1}{s \left(m \tau s^2 + (m+d \tau) s+d
+r^{-1}\right)}. \end{equation}

The corresponding closed loop transfer functions in \reeq{h} for $
k=0,1,\ldots,n-1$ are:
\begin{align}
h_k(s) &= \frac{\tau s+1}{m\tau s^3 + (m+d\tau)s^2
+(d+r^{-1}+\lambda_k\tau)s+\lambda_k}. \label{eq.hkturbine}
\end{align}

It can be checked (e.g. by applying the Routh-Hurwitz criterion) that
$h_k(s)$ is stable whenever $\lambda_k>0$.

\subsection{System frequency}

We can again use \reeq{wbart} and \eqref{eq:g_0s-turbine} to compute the system frequency
\begin{align}
\bar{w}(t)  &= \frac{\sum_i u_{0i}}{\sum_i f_i} \mathcal L^{-1}\left\{ g_0(s)\right\},
\end{align}
but now the inverse transform of $g_0(s)$ is more involved.

Using partial fractions we first express
\begin{align*}
g_0(s) &=\frac{1}{d+r^{-1}}
\left(\frac{1}{s} - \frac{s + \left( \frac{1}{\tau}-\frac{r^{-1}}{m}\right)}{s^2 + \left(\frac{1}{\tau} +\frac{d}{m}\right)s + \frac{d+r^{-1}}{m\tau}}  \right)
\end{align*}

The first term provides the steady-state response, which is
\begin{align*}
w_\infty =\frac{\sum_i u_{0i}}{\sum_i f_i}\frac{1}{d+r^{-1}}
=\frac{\sum_i u_{0i}}{d_i+r_i^{-1}};
\end{align*}
this is analogous to the swing equation case, except than the
droop control has been added to the damping. Again, we observe
that inertia plays no role at all in this steady-state deviation.

The transient term is a second-order transfer function, which we
proceed to analyze now.
Its behavior critically depends on whether its poles are real or complex conjugate.
In particular, whenever
\begin{equation}\label{eq:omegad}
 \frac{d+r^{-1}}{m\tau}-\frac{1}{4}\left(\frac{1}{\tau} +\frac{d}{m}\right)^2=:\omega_d^2>0
\end{equation}
the system is under-damped with poles $\eta\pm j\omega_d$, and
\begin{align}
&g_0(t) = \mathcal L^{-1}\left\{\frac{1}{d+r^{-1}}\left(\frac{1}{s}-\frac{s+\gamma}{(s+\eta)^2 + \omega_d^2}\right)\right\}\nonumber\\
&=\frac{1}{d+r^{-1}}\!\left[1\!-\!{e^{-\eta t}}\left(\cos(\omega_d t) \!-\! \frac{(\gamma\!-\!\eta)}{\omega_d}\sin(\omega_d t)\right)\right]
\label{eq:g0-turbine}
\end{align}
where
\begin{equation}\label{eq:eta-gamma}
\eta:= \frac{1}{2}\left(\frac{1}{\tau}+\frac{d}{m}\right)\quad\text{and}\quad \gamma:=\left(\frac{1}{\tau}-\frac{r^{-1}}{m}\right).
\end{equation}

The system frequency time evolution is given by
\begin{equation}\label{eq:wbar-wturbine}
\bar w(t)\!=\!\frac{\sum_iu_{0i}}{\sum_i d_i\!+\!r_i^{-1}}\! \ g_0(t),
\end{equation}
with $g_0(t)$ from \eqref{eq:g0-turbine}.

A few observations are in order:
\begin{itemize}
\item Including the turbine model has a nontrivial effect on the system frequency $\bar w(t)$. It is the presence of the turbine dynamics that provides the characteristic under-damped behavior that produces a Nadir.
\item We have only provided here the solution of $\bar w(t)$ for the (practically more relevant) under-damped
case.
\item Interestingly, \eqref{eq:omegad} shows that the system may become over-damped by either increasing $m$, or decreasing
 $m$! However, the behavior is different for each case: in the very high
 inertia case the Nadir disappears; whereas when $m$ goes to zero, there is an overshoot in the overdamped response.
Since in practice this occurs only for very low inertia and already way beyond the acceptable deviation, we are justified in our focus on the under-damped case.
 \end{itemize}
We now proceed to compute the Nadir and RoCoF for this situation.

\subsubsection{Nadir} \mbox{}\\
In order to compute the Nadir we will use
\[
||\bar w||_\infty =\left|\frac{\sum_i u_{0i}}{\sum_i f_i}\right| ||g_0||_\infty.
\]
Thus, one can compute the Nadir by finding the maximum excursion of $g_0(t)$.
The following proposition summarizes the overall calculation which can be find in Appendix \ref{app.nadir}.

\begin{proposition}[Nadir]\label{prop.nadir}
Given a power system under Assumption \ref{ass.scale} with generators containing first order turbine dynamics ($g_i(s)$ given by~\eqref{eq.giturbine}). Then under the under-damped condition \eqref{eq:omegad}, the Nadir is given by
\begin{equation}\label{eq:nadir}
||\bar w||_\infty =\frac{\left|\sum_i u_{0i}\right|}{\sum_i f_i} \frac{1}{d\!+\!r^{-1}}\left(1\!+\!\sqrt{\frac{\tau r^{-1}}{m} }e^{\!-\!\frac{\eta}{\omega_d}\left(\!\phi+\frac{\pi}{2}\!\right)}\right),
\end{equation}
where the phase  $\phi\in(-\frac{\pi}{2},\frac{\pi}{2})$ is
uniquely determined by
\begin{equation}\label{eq:sinphi}
\sin(\phi)=\frac{\left(\frac{1}{\tau
}-\eta\right)}{\sqrt{\omega_d^2 + \left(\frac{1}{\tau}-\eta\right)^2}}=\frac{m-d\tau}{2\sqrt{m\tau r^{-1}}}.\vspace{1ex}
\end{equation}
\end{proposition}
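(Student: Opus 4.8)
The plan is to reduce everything to the scalar excursion $\|g_0\|_\infty$, since by the identity displayed just before the statement $\|\bar w\|_\infty = \left|\frac{\sum_i u_{0i}}{\sum_i f_i}\right|\,\|g_0\|_\infty$. From \eqref{eq:g0-turbine} I note that $g_0(0)=0$, that $g_0(t)\to \frac{1}{d+r^{-1}}>0$ as $t\to\infty$, and that under the under-damped condition \eqref{eq:omegad} the approach to steady state is oscillatory; hence the maximum of $|g_0|$ is attained at an interior critical point rather than at $t=0$ or $t=\infty$. So I would first locate the relevant critical point and then evaluate $g_0$ there.

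First I would differentiate cleanly. Since $g_0(0)=0$ we have $\dot g_0(t)=\mathcal L^{-1}\{s\,g_0(s)\}$, and multiplying $g_0(s)$ by $s$ cancels the integrator pole, leaving the proper second-order term $\frac{s+\tfrac1\tau}{m\left((s+\eta)^2+\omega_d^2\right)}$. Writing $s+\tfrac1\tau=(s+\eta)+(\tfrac1\tau-\eta)$ and inverting gives $\dot g_0(t)=\frac{1}{m}e^{-\eta t}\left(\cos(\omega_d t)+\frac{\tfrac1\tau-\eta}{\omega_d}\sin(\omega_d t)\right)$. I would collapse the bracket to $\frac{\sqrt{\omega_d^2+(\tfrac1\tau-\eta)^2}}{\omega_d}\cos(\omega_d t-\phi)$, where $\phi\in(-\tfrac\pi2,\tfrac\pi2)$ is exactly the phase in \eqref{eq:sinphi} (the cosine coefficient is positive, which pins $\phi$ to that range). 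Thus $\dot g_0$ vanishes at $\omega_d t=\phi+\tfrac\pi2+k\pi$, and the first positive root $t_1=(\phi+\tfrac\pi2)/\omega_d$ is a maximum, since $\dot g_0(0)=1/m>0$ shows $g_0$ is initially increasing.

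Next I would evaluate $g_0(t_1)$ from \eqref{eq:g0-turbine}. At $t_1$ one has $\cos(\omega_d t_1)=-\sin\phi$ and $\sin(\omega_d t_1)=\cos\phi$; substituting and using $\sin\phi,\cos\phi$ from \eqref{eq:sinphi}, the bracketed oscillatory factor simplifies to $-\sqrt{\tau r^{-1}/m}$. The algebra that makes this go through is the identity $\omega_d^2+(\tfrac1\tau-\eta)^2=\frac{r^{-1}}{m\tau}$, which I would verify by expanding $\omega_d^2$ from \eqref{eq:omegad} with $\eta$ from \eqref{eq:eta-gamma}; this is exactly what turns the denominator $\sqrt{\omega_d^2+(\tfrac1\tau-\eta)^2}$ of \eqref{eq:sinphi} into $\sqrt{r^{-1}/(m\tau)}$ and produces the amplitude $\sqrt{\tau r^{-1}/m}$. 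Combined with $e^{-\eta t_1}=e^{-\frac{\eta}{\omega_d}(\phi+\pi/2)}$, this yields $g_0(t_1)=\frac{1}{d+r^{-1}}\left(1+\sqrt{\tau r^{-1}/m}\,e^{-\frac{\eta}{\omega_d}(\phi+\pi/2)}\right)$, and multiplying by the prefactor gives \eqref{eq:nadir}.

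The main obstacle is not any single computation but justifying that this first peak is the global maximum of $|g_0|$. I would argue this uniformly: every extremum of $g_0$ lies at $\omega_d t=\phi+\tfrac\pi2+k\pi$, and at each one the oscillatory factor has the \emph{same} magnitude $\sqrt{\tau r^{-1}/m}$, so the deviation of $g_0$ from its steady state $\tfrac1{d+r^{-1}}$ is $\frac{1}{d+r^{-1}}\sqrt{\tau r^{-1}/m}\,e^{-\eta t}$, strictly decreasing in $t$. Hence the earliest extremum $t_1$ carries the largest deviation; being an overshoot above the positive steady state, $g_0(t_1)$ dominates the magnitude at every later trough and peak (a one-line comparison covers the case where a trough dips below zero), so $\|g_0\|_\infty=g_0(t_1)$. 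A secondary point to watch is the sign bookkeeping in \eqref{eq:g0-turbine} when forming the factor at $t_1$: the $\gamma-\eta$ and $\tfrac1\tau-\eta$ contributions must be combined correctly for the cancellation $\gamma-\tfrac1\tau=-r^{-1}/m$ to surface, which is what ultimately generates the $\sqrt{\tau r^{-1}/m}$ term.
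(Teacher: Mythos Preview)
Your proposal is correct and follows essentially the same route as the paper's proof: compute $\dot g_0$ via the inverse Laplace transform of $s\,g_0(s)$, collapse it to $\frac{1}{m}\sec\phi\,e^{-\eta t}\cos(\omega_d t-\phi)$, read off the first critical time $t_1=(\phi+\tfrac\pi2)/\omega_d$, and evaluate $g_0(t_1)$ using the identity $\omega_d^2+(\tfrac1\tau-\eta)^2=r^{-1}/(m\tau)$ together with $\gamma-\tfrac1\tau=-r^{-1}/m$ to obtain the $\sqrt{\tau r^{-1}/m}$ amplitude. Your justification that $t_1$ is the \emph{global} maximizer---observing that every extremum has the same oscillatory factor magnitude so the deviation from steady state decays monotonically as $e^{-\eta t}$---is in fact more explicit than the paper, which simply asserts that for a stable second-order response the maximum occurs at the first stationary point.
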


The dependence of \eqref{eq:nadir} on $m$ is not
straightforward, as $\phi$, $\eta$, and $\omega_d$ depend on it.
The next proposition shows that the dependence is as expected by
conventional power engineering wisdom.
\begin{proposition}\label{prop.dNadirdm}
Given a power system under Assumption \ref{ass.scale} with generators containing first order turbine dynamics ($g_i(s)$ given by~\eqref{eq.giturbine}). Then under the under-damped condition \eqref{eq:omegad}, the maximum frequency deviation $||\bar w||_\infty$ is a decreasing function of $m$, i.e., $\frac{\partial}{\partial m}||\bar w||_\infty<0$.
\end{proposition}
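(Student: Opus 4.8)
The plan is to start from the closed form \eqref{eq:nadir} and peel off the factor $\frac{|\sum_i u_{0i}|}{\sum_i f_i}\frac{1}{d+r^{-1}}$, which does not depend on $m$. The claim then reduces to showing that the overshoot factor $M(m) := \sqrt{\frac{\tau r^{-1}}{m}}\,e^{-\frac{\eta}{\omega_d}(\phi+\frac{\pi}{2})}$ is strictly decreasing in $m$, equivalently $\frac{d}{dm}\ln M < 0$. The obstacle is that $\eta$, $\omega_d$ and $\phi$ all vary with $m$ in a coupled way, and the ratio $\eta/\omega_d$ is non-monotone --- it diverges at both ends of the under-damped band \eqref{eq:omegad} --- so a term-by-term sign argument cannot succeed.

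My key device is a reparametrization that untangles these dependencies. I would introduce $u := \sqrt{m/(\tau r^{-1})}$, which is strictly increasing in $m$, together with the dimensionless ratio $\beta := dr$. Then \eqref{eq:sinphi} becomes simply $\sin\phi = \frac{u^2-\beta}{2u}$, the under-damped band \eqref{eq:omegad} is the clean interval $u\in(\sqrt{1+\beta}-1,\sqrt{1+\beta}+1)$, and a short computation using \eqref{eq:eta-gamma} and \eqref{eq:omegad} rewrites the overshoot factor as $M=\frac{1}{u}\,e^{-N\Phi}$, where $\Phi:=\phi+\frac{\pi}{2}$ and $N:=\frac{\eta}{\omega_d}=\frac{u^2+\beta}{2u\cos\phi}$. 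Since $m\mapsto u$ is increasing, it suffices to prove $\frac{d}{du}\ln M<0$ on this interval.

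Next I would differentiate. The reparametrization is engineered so that two clean identities hold: differentiating $\sin\phi=\frac{u^2-\beta}{2u}$ gives $\frac{d\phi}{du}=\frac{N}{u}$, and differentiating $N=\frac{u^2+\beta}{2u}\sec\phi$ then gives $\frac{dN}{du}=\frac{\tan\phi}{u}(1+N^2)$. Substituting these into $\frac{d}{du}\ln M=-\frac{1}{u}-\frac{dN}{du}\Phi-N\frac{d\Phi}{du}$ produces the factorization $\frac{d}{du}\ln M=-\frac{1}{u}\,(1+N^2)\,\bigl(1+\Phi\tan\phi\bigr)$. Because $u>0$ and $1+N^2>0$, the whole statement collapses to the single scalar inequality $1+(\phi+\frac{\pi}{2})\tan\phi>0$ for $\phi\in(-\frac{\pi}{2},\frac{\pi}{2})$. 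I expect spotting this cancellation to be the crux: everything before it is bookkeeping, and everything after it is elementary.

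Finally I would dispatch the trig inequality. For $\phi\in[0,\frac{\pi}{2})$ it is immediate, since both $\phi+\frac{\pi}{2}>0$ and $\tan\phi\geq0$. For $\phi\in(-\frac{\pi}{2},0)$, set $\theta=-\phi\in(0,\frac{\pi}{2})$ and reduce to $(\frac{\pi}{2}-\theta)\tan\theta<1$; since $\cos\theta>0$ this is equivalent to $p(\theta):=\cos\theta-(\frac{\pi}{2}-\theta)\sin\theta>0$. One checks $p(\frac{\pi}{2})=0$ and $p'(\theta)=-(\frac{\pi}{2}-\theta)\cos\theta<0$, so $p$ is strictly decreasing and hence positive on $(0,\frac{\pi}{2})$. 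This yields $\frac{d}{du}\ln M<0$, and monotonicity of $u$ in $m$ gives $\frac{\partial}{\partial m}\|\bar w\|_\infty<0$, as claimed.
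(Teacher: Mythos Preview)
Your proposal is correct, and the route is genuinely different from the paper's. The paper differentiates directly in $m$, using two supporting lemmas that compute $\partial\phi/\partial m$ and $\partial(\eta/\omega_d)/\partial m$ in the original parameters, and after a long chain of algebra arrives at the bracket
\[
-1 \;-\; \frac{(m+d\tau)^2}{4m\tau r^{-1}-(m-d\tau)^2}\;-\;\alpha\,\frac{4m\tau(d+r^{-1})(m-d\tau)}{\bigl(4m\tau r^{-1}-(m-d\tau)^2\bigr)^{3/2}},
\]
which it then simply asserts to be negative. Your reparametrization $u=\sqrt{m/(\tau r^{-1})}$, $\beta=dr$ together with the logarithmic derivative lands on the \emph{same} quantity (up to a positive factor), but expressed as the product $-(1+N^2)\bigl(1+\Phi\tan\phi\bigr)$; the identities $\phi'=N/u$ and $N'=(\tan\phi)(1+N^2)/u$ replace the paper's two lemmas with one-line computations. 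Beyond economy, your factorization actually strengthens the argument: in the paper's bracket the third term changes sign when $m<d\tau$, so negativity is not immediate there, whereas your reduction to the scalar inequality $1+(\phi+\tfrac{\pi}{2})\tan\phi>0$ and the monotonicity argument for $p(\theta)=\cos\theta-(\tfrac{\pi}{2}-\theta)\sin\theta$ close that case cleanly.
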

The proof can be found in Appendix \ref{app.nadir.2}.

\subsubsection{RoCoF}\mbox{}\\
A similar procedure as the one used to study the Nadir of the system frequency can be used to investigate the properties of the maximum rate of change of frequency (RoCoF).

\begin{proposition}[RoCoF]\label{prop.rocof}
Given a power system under Assumption \ref{ass.scale} with generators containing first order turbine dynamics ($g_i(s)$ given by~\eqref{eq.giturbine}). Then under the under-damped condition \eqref{eq:omegad}, the RoCoF is given by
\begin{equation}\label{eq:rocof}
||\dot{\bar w}||_\infty = \frac{\left|
\sum_iu_{0,i}\right|}{\sum_i f_i}\frac{1}{m}.
\end{equation}
\end{proposition}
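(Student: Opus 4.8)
The plan is to reduce the claim to a statement about the representative machine's frequency response $g_0$, and then to show that its maximal slope occurs at the initial instant. By \eqref{eq:wbar-wturbine} the system frequency is $\bar w(t)=\frac{\sum_i u_{0i}}{\sum_i f_i}\,g_0(t)$, so $\dot{\bar w}(t)=\frac{\sum_i u_{0i}}{\sum_i f_i}\,\dot g_0(t)$ and $\|\dot{\bar w}\|_\infty=\frac{|\sum_i u_{0i}|}{\sum_i f_i}\,\|\dot g_0\|_\infty$; it remains to prove $\|\dot g_0\|_\infty=1/m$. First I would record the initial value $\dot g_0(0^+)=1/m$, most cleanly from the initial value theorem applied to $s^2 g_0(s)$ (which tends to $\tau/(m\tau)=1/m$ as $s\to\infty$, using $g_0(0^+)=0$). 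Differentiating \eqref{eq:g0-turbine}---or, equivalently, inverting $s\,g_0(s)$---then gives the closed form $\dot g_0(t)=\frac{1}{m}\,e^{-\eta t}\big(\cos\omega_d t+K\sin\omega_d t\big)$, with $K:=(\tfrac{1}{\tau}-\eta)/\omega_d$ and $\eta,\omega_d$ as in \eqref{eq:omegad}--\eqref{eq:eta-gamma}.

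The core of the argument is to show that this initial rate is in fact the global maximum of $|\dot g_0|$. I would set $P(t):=(m\,\dot g_0(t))^2=e^{-2\eta t}\big(\cos\omega_d t+K\sin\omega_d t\big)^2$ and prove $\max_{t\ge 0}P(t)=P(0)=1$. The key computation is the value of $P$ at its interior stationary points: writing $g(t)=\cos\omega_d t+K\sin\omega_d t$, a critical point of $P$ with $g\ne 0$ satisfies $g'=\eta g$, which fixes $\tan\omega_d t$; substituting this back collapses $g^2$ to the \emph{constant} $\frac{\omega_d^2(1+K^2)}{\eta^2+\omega_d^2}$, so that $P(t_m)=e^{-2\eta t_m}\,\frac{\omega_d^2(1+K^2)}{\eta^2+\omega_d^2}$ at every such $t_m$.

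Since $e^{-2\eta t_m}\le 1$, it then suffices to check $\frac{\omega_d^2(1+K^2)}{\eta^2+\omega_d^2}\le 1$, i.e. $\omega_d^2 K^2\le\eta^2$, i.e. $|K|\le\eta/\omega_d$. This I would obtain from the two identities $\tfrac{\eta}{\omega_d}-K=\tfrac{d}{m\omega_d}>0$ and $\tfrac{\eta}{\omega_d}+K=\tfrac{1}{\tau\omega_d}>0$ (both immediate from $2\eta=\tfrac{1}{\tau}+\tfrac{d}{m}$), which give $|K|<\eta/\omega_d$ strictly. Hence every interior critical value of $P$ is strictly below $1$; combined with $P(0)=1$, $P\ge 0$, and $P(t)\to 0$ as $t\to\infty$ (stability, $\eta>0$), the global maximum of $P$ is attained at $t=0$ and equals $1$. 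Therefore $\|\dot g_0\|_\infty=1/m$, which yields \eqref{eq:rocof}. The main obstacle is precisely this global-maximum step---the envelope $e^{-\eta t}$ decays monotonically, but the sinusoid can later realign with its peak, so one must rule out a larger excursion at a subsequent instant---and the decisive simplification is the closed-form critical value of $P$ together with the bound $|K|<\eta/\omega_d$, which shows the decayed envelope can never overcome the phase deficit present at $t=0$.
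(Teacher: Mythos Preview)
Your argument is correct and slightly cleaner than the paper's. Both proofs reduce to showing $\|\dot g_0\|_\infty=1/m$ with $\dot g_0(0^+)=1/m$, and both then establish that no later extremum of $|\dot g_0|$ exceeds this value. The paper proceeds by first deriving a closed form for $\ddot g_0(t)$ (introducing a second phase angle $\beta>\phi$), locating the first critical time $t^*=(\beta+\tfrac{\pi}{2})/\omega_d$, evaluating $\dot g_0(t^*)$ explicitly, and showing it lies strictly in $(-1/m,0)$; periodicity of the trigonometric factor and the decaying envelope then dispose of all subsequent extrema. Your route bypasses $\ddot g_0$ entirely: working with $P=(m\dot g_0)^2$, you observe that at any interior critical point $g^2$ collapses to the constant $\omega_d^2(1+K^2)/(\eta^2+\omega_d^2)$, and the single inequality $|K|<\eta/\omega_d$---which you read off directly from $2\eta=\tfrac{1}{\tau}+\tfrac{d}{m}$---bounds this constant strictly below $1$. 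What the paper's approach buys is the explicit value of $\dot g_0$ at the first local minimum; what your approach buys is economy, since it avoids the auxiliary lemma on $\ddot g_0$, the angle $\beta$, and the somewhat lengthy simplification of $\dot g_0(t^*)$.
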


The proof of Proposition \ref{prop.rocof} can also be found in  Appendix \ref{app.rocof};
the main difference with Proposition \ref{prop.nadir} is that, while not trivial to establish, here
 the maximum is always achieved at $t=0+$, exactly as in the
second order case of \reeq{rocofswing}.

The dependence of $||\dot{\bar w}||_\infty$ on $m$ is now easily
addressed and again as expected: RoCoF decreases with $m$.

\subsection{Synchronization cost}
The synchronization cost $\|\tilde{w}\|^2$ can once again be computed
through  Proposition \ref{prop.wtil}, which requires finding the
inner products $\ip{h_k}{h_l}$, in this case for the functions in
\reeq{hkturbine}.

Since the corresponding expression is in general rather unwieldy (see Appendix \ref{app.ip}), we
will present some simpler cases, beginning with $k=l$; the norm, found in
Appendix \ref{app.ip} is:
\begin{align}\label{eq.hknormturbine}
\|h_k\|_2^2 = \frac{m+\tau(\lambda_k \tau +
d)}{2\lambda_k\left[m(r^{-1}+d) +\tau d(r^{-1}+\lambda_k \tau +
d)\right]}.
\end{align}

\subsubsection{Homogeneous case}

The above expression suffices to analyze the case of homogeneous machines,
where $\Gamma=I$ and $Y$ is diagonal. We have from \reeq{wtilhomog} that
\begin{align*}
\|\tilde{w}\|_2^2 = \sum_{k=1}^{n-1} (v_k^T u_{0})^2 \|h_k\|_2^2;
\end{align*}
from \reeq{hknormturbine} we see that, in contrast to the second
order machine model, the inertia $m$ does affect the
synchronization cost. A closer look at $\|h_k\|^2$ as a (linear
fractional) function of $m$ shows that it is \emph{decreasing} in
$m \in (0,\infty) $, going from
\[
\|h_k\|^2_2 \stackrel{m\to 0+}{\longrightarrow} \frac{1}{2\lambda_k d } \cdot \frac{\lambda_k \tau + d}{r^{-1}+\lambda_k \tau + d},
\]
to
\[
\|h_k\|^2_2 \stackrel{m\to \infty}{\longrightarrow}  = \frac{1}{2\lambda_k d } \cdot \frac{d}{r^{-1}+d}.
\]
So higher inertia is beneficial in this case. Recalling that the corresponding cost for the swing dynamics is
$\frac{1}{2\lambda_k d }$, we see that this cost has been reduced. In the high inertia case,
the main change is the increased damping through the droop coefficient $r^{-1}$.

\subsubsection{Heterogeneous, high inertia case}

As mentioned, the formula for $\ip{h_k}{h_l}$ for $k\neq l$ is quite formidable, but
we can give its approximation in the limit of large $m$:
\begin{align*}
\ip{h_k}{h_l} \stackrel{m\to \infty}{\sim}
\frac{2(d+r^{-1})}{m(\lambda_k-\lambda_l)^2}, \quad k\neq l.
\end{align*}
This assumes $\lambda_k\neq \lambda_l$. So if the eigenvalues of
the scaled Laplacian $L_F$ are distinct, we see that again the
matrix $Y$ becomes diagonal as $m\to \infty$. The limiting cost is
\[
\|\tilde{w}\|_2^2 \stackrel{m\to \infty}{\longrightarrow}
\sum_{k=1}^{n-1} \frac{z_{0k}^2 \gamma_{kk}}{2\lambda_k d } \cdot
\frac{d}{r^{-1}+d}.
\]
This expression amounts to reducing to the cost
\reeq{cost2heterohigh} for the second order dynamics, by the
fraction $\frac{d}{r^{-1}+d}$. So the role of the turbine in a
high inertia system is again mainly  a change in the droop
coefficient.

\subsubsection{Heterogeneous, low inertia case}
In the low inertia limit, we find that $\ip{h_k}{h_l} \stackrel{m\to 0}{\longrightarrow} \frac{N}{D}$, where
\begin{align*}
N =\ &  2d (d+r^{-1})+ \tau(2d+r^{-1})(\lambda_k+\lambda_l) + 2 \lambda_k\lambda_l \tau^2,\\
D =\ & 2d (d+r^{-1})^2(\lambda_k+\lambda_l) + d \tau(2d+r^{-1})(\lambda_k+\lambda_l)^2 \\ & +  2 d \tau \lambda_k\lambda_l [2r^{-1}+\tau (\lambda_k+\lambda_l)].
\end{align*}
So the limiting matrix $Y$ is not diagonal, as in the second order
case; an expression analogous to \reeq{cost2heterolow} can be
written. Comparisons between the two are not straightforward here,
and must be pursued by numerical experimentation.

\section{Conclusions}\label{sec.concl}

We have studied system-theoretic measures of synchronization
performance, with the aim of covering more realistic scenarios
than the recent literature, and also closing the gap with power
engineering practice. In particular, for a family of heterogeneous
machine systems, we have focused on the step response of the bus
frequency vector, decomposed as a system-wide weighted mean and the
vector of relative differences to it. The key mathematical assumption is a
relative proportionality in machine parameters, which may be
incorporated together with the network model to perform an
adequate diagonalization.

With this transformation, the natural system frequency (motion of
the COI) becomes independent of the network, and its
characteristics can be studied through a representative machine.
The energy of the synchronization error around this mean depends
on both network and machine models, but we have an expression that
encapsulates the latter in terms of a matrix of inner products.
From this general result special cases can be studied.

A key question of interest to practitioners is the role of
inertia, in particular whether low inertia can compromise
performance. Our analysis shows that if a second-order, swing
equation model is used for each machine, the impact of inertia is
small. The global system frequency  exhibits no overshoot, inertia
affecting mainly its time constant; while inertia appears in the
energy of oscillations, its impact is not significant. The story
changes if a dynamic model of turbine control is adopted, where
the turbine time-constant is in the order of magnitude of the
swing dynamics. In that case, inertia does play a positive role,
providing resilience of the peak system frequency deviation and
reducing the norm of relative oscillations.

Future work will involve carrying out these calculations for
actual networks with real parameters.

\appendices

\section{Inner product computation}\label{app.ip}

We show here how to evaluate the inner product in \reeq{defy}
using state-space methods. We start with a state-space
realization of the representative machine,
\begin{align*}
g_0(s) = \stsp{A}{B}{C}{0}.
\end{align*}
If follows easily that each $h_k$ in \reeq{h} has realization
\begin{align*}
h_k(s) = \stsp{A_k}{B}{C}{0}, \mbox{ where } A_k = A - \lambda_k B C.
\end{align*}
Note that the state matrix $A_k$ is the only one that depends on
the eigenvalue $\lambda_k$ under consideration. By Assumption
\ref{ass.stable}, $A_k$ is a Hurwitz matrix for any $\lambda_k>0$.

Writing $h_k(t) = C e^{A_k t} B$ for the impulse response, we compute the inner product between two such functions:
\begin{align}
\ip{h_k}{h_l} &= \int_0^\infty h_k(t) h_l(t)^T dt \nonumber  \\
&= C \underbrace{
\left(\int_0^\infty e^{A_k t} B B^T e^{A_l^T t} dt \right)}_{Q_{kl}} C^T; \label{eq.Qkl}
\end{align}
here $^T$ denotes matrix transpose. A standard calculation shows that
 $Q_{kl}$ satisfies the Sylvester equation
\begin{align}\label{eq.sylv}
A_k Q_{kl} + Q_{kl} A_l^T + BB^T = 0.
\end{align}
Furthermore since the eigenvalues of $A_k, A_l$ never add up to zero it follows (see
\cite{ZhouDG}) that \reeq{sylv} has a unique solution $Q_{kl}$.
Thus, the relevant inner product can be found by solving the above linear equation and
substituting into \reeq{Qkl}.

\subsection*{Second order machine model}

In the situation of Section \ref{sec.swing}, it is easily checked that
\[
A_k = \bmat{0 & 1 \\ -\frac{\lambda_k}{m} & -\frac{d}{m}} \quad B= \bmat{0 \\ -\frac{1}{m}}; \quad C = \bmat{1 & 0 }.
\]
In this case the solution to the Sylvester equation is
\begin{align*}
Q_{kl} = \frac{2 d}{m(\lambda_k - \lambda_l)^2 + 2(\lambda_k + \lambda_l)d^2}
\bmat{1 & \frac{\lambda_k - \lambda_l}{2d} \\ \frac{\lambda_l - \lambda_k}{2d}&  \frac{\lambda_k + \lambda_l}{2m}};
\end{align*}
Substitution into \reeq{Qkl} for the given $C$ proves Proposition
\ref{prop.ipswing}.

\subsection*{Third order machine model}

Here the relevant matrices are
\[ \setlength{\arraycolsep}{.3em}
A_k = \bmat{0 & 1 & 0\\-\frac{\lambda_k}{m} & -\frac{d}{m} & \frac{1}{m}\\
0 &  -\frac{r^{-1}}{\tau} &  -\frac{1}{\tau}};\
B = \bmat{0\\-\frac{1}{m}\\0};
\ C = \bmat{1 & 0 & 0}.
\]
The Sylvester equations for $Q_{kl}$ in this case (9 linear equations, 9 unknowns) give
unwieldy expressions. We report first the case $k=l$, which remains tractable; here
we have a  Lyapunov equation with symmetric solution
\begin{align}
Q_{kk} =
\frac{1}{\Delta} \bmat{\frac{m + \tau(\lambda_k + d)}{\lambda_k} & 0 & -\tau r^{-1} \\
0 & \frac{m+\tau(r^{-1}+ \lambda_k \tau + d)}{m} & -r^{-1}\\
-\tau r^{-1} &   r^{-1} & r^{-2}},
\end{align}
where $\Delta = 2[m(r^{-1}+d) +  \tau d (r^{-1}+d + \lambda_k\tau)].$
By looking at the $(1,1)$ element of this matrix we find the norm
$\|h_k\|^2 = \ip{h_k}{h_k}$, which coincides with the expression given in
\reeq{hknormturbine}.

Going now to the general case $k\neq l$, we report here only the inner product 
obtained from the (1,1) element of $Q_{kl}$, itself found by solving the Sylvester equation
using the Matlab symbolic toolbox:

\begin{align*}
\langle{h_k},{h_l}\rangle= \frac{N}{D},
\end{align*}
where
\begin{align*}
N=&
2(2dm^2 + 2m^2r^{-1} + 2d^3\tau^2 + 4d^2m\tau + 2d^2\lambda_k\tau^3 \\
& + 2d^2\lambda_l\tau^3 + 2d^2r^{-1}\tau^2 + 4dmr^{-1}\tau + 2d\lambda_k\lambda_l\tau^4 \\
&+ 2d\lambda_km\tau^2 + 2d\lambda_lm\tau^2 + d\lambda_kr^{-1}\tau^3 + d\lambda_lr^{-1}\tau^3 \\
& + \lambda_kmr^{-1}\tau^2 + \lambda_lmr^{-1}\tau^2), \\
D =&
4d^4\lambda_k\tau^2 + 4d^4\lambda_l\tau^2 + 4d^3\lambda_k^2\tau^3 + 8d^3\lambda_k\lambda_l\tau^3 \\
& + 8d^3\lambda_km\tau + 8d^3\lambda_kr^{-1}\tau^2 + 4d^3\lambda_l^2\tau^3 + 8d^3\lambda_lm\tau \\
& + 8d^3\lambda_lr^{-1}\tau^2  + 4d^2\lambda_k^2\lambda_l\tau^4 + 6d^2\lambda_k^2m\tau^2 \\
& + 2d^2\lambda_k^2r^{-1}\tau^3 + 4d^2\lambda_k\lambda_l^2\tau^4 + 4d^2\lambda_k\lambda_lm\tau^2 \\
&+ 12d^2\lambda_k\lambda_lr^{-1}\tau^3  + 4d^2\lambda_km^2 + 16d^2\lambda_kmr^{-1}\tau\\
& + 4d^2\lambda_kr^{-2}\tau^2+ 6d^2\lambda_l^2m\tau^2 + 2d^2\lambda_l^2r^{-1}\tau^3 \\
&+ 4d^2\lambda_lm^2  + 16d^2\lambda_lmr^{-1}\tau + 4d^2\lambda_lr^{-2}\tau^2 +2d\lambda_k^3m\tau^3 \\
& - 2d\lambda_k^2\lambda_lm\tau^3 + 4d\lambda_k^2m^2\tau - 2d\lambda_k\lambda_l^2m\tau^3 - 8d\lambda_k\lambda_lm^2\tau \\
& + 16d\lambda_k\lambda_lmr^{-1}\tau^2  + 8d\lambda_km^2r^{-1}  +  8d\lambda_kmr^{-2}\tau \\
& + 2d\lambda_l^3m\tau^3 + 4d\lambda_l^2m^2\tau + 8d\lambda_lm^2r^{-1} + 8d\lambda_lmr^{-2}\tau \\ 
& + 2\lambda_k^3\lambda_lm\tau^4 + 2\lambda_k^3m^2\tau^2 + \lambda_k^3mr^{-1}\tau^3 - 4\lambda_k^2\lambda_l^2m\tau^4 \\
 &- 2\lambda_k^2\lambda_lm^2\tau^2 - \lambda_k^2\lambda_lmr^{-1}\tau^3  + 2\lambda_k^2m^3 - 2\lambda_k^2m^2r^{-1}\tau  \\
 &+ 2\lambda_k\lambda_l^3m\tau^4 - 2\lambda_k\lambda_l^2m^2\tau^2  - \lambda_k\lambda_l^2mr^{-1}\tau^3 \\
 &- 4\lambda_k\lambda_lm^3 + 4\lambda_k\lambda_lm^2r^{-1}\tau  + 4\lambda_km^2r^{-2} + 2\lambda_l^3m^2\tau^2 \\ &+ \lambda_l^3mr^{-1}\tau^3  + 2\lambda_l^2m^3 - 2\lambda_l^2m^2r^{-1}\tau+ 4\lambda_lm^2r^{-2}.
\end{align*}
The limiting cases $m\to 0$ and $m\to \infty$, presented in Section \ref{sec.turbine} were obtained from this general formula.

\section{Proof of Proposition \ref{prop.nadir}}\label{app.nadir}

\begin{proof}

Since the second order system $g_0(s)$ is stable, the maximum of the impulse response is either at $t=0$ or the first time $\dot{g_0}(t)=0$. Since $\bar g_0(0)=0$, then it must be the latter.

Therefore we need to find the time instance $t_\text{Nadir}$ such that $|g_0(t_\text{Nadir})|=\underset{t\geq0}{\sup}|g_0(t)|$.

The time derivative of \eqref{eq:g0-turbine} is given by
\begin{align}
\dot g_0(t) &=\mathcal L^{-1}\left\{ sg_0(s)-g_0(t)|_{t=0^+}\right\} \\
&= \frac{1}{m}\sqrt{1+(\tan(\phi))^2}e^{-\eta t}
\cos(\omega_{d}t-\phi)~\label{eq:dot-g0-turbine}
\end{align}
where $\phi$ is defined as in the \eqref{eq:sinphi}.

Setting now $\dot g_0(t)=0$ in \eqref{eq:dot-g0-turbine} gives
\[
t_k =  \frac{\phi +\frac{\pi}{2}+k\pi}{\omega_d}, \quad k\geq0
\]
and since $\phi+\frac{\pi}{2}>0$, the first maximum is for $k=0$.
Therefore $$t_\text{Nadir}=\frac{\phi+\frac{\pi}{2}}{\omega_d}$$
which after substituting in \eqref{eq:g0-turbine} gives
\begin{align*}
&||g_0||_\infty = |g_0(t_\text{Nadir})| = \\
&\!=\!\frac{1}{d\!+\!r^{-1}}\!
\left[
1 \!-\!e^{\!-\!\frac{\eta}{\omega_d}\left(\!\phi+\frac{\pi}{2}\!\right)}\!\!
\left(\! \cos\!\left(\phi\!+\!\frac{\pi}{2}\right) \!+\! \frac{\gamma\!-\!\eta}{\omega_d} \sin\!\left(\phi\!+\!\frac{\pi}{2}\right) \!\right)\!
\right]\\
&\!=\!\frac{1}{d\!+\!r^{-1}}\left(1+\sqrt{\frac{\tau r^{-1}}{m} }e^{\!-\!\frac{\eta}{\omega_d}\left(\!\phi+\frac{\pi}{2}\!\right)}\right)
\end{align*}
where the last step follows from \eqref{eq:omegad}, \eqref{eq:eta-gamma}, \eqref{eq:sinphi} and 
\begin{equation}\label{eq:cosphi}
\cos(\phi) = \frac{\omega_d}{ \sqrt{\omega_d^2 + \left(\frac{1}{\tau}-\eta\right)^2 } }=\sqrt{1-\frac{(m-d\tau)^2}{4m\tau r^{-1}}},
\end{equation}
as well as several trigonometric identities.

Therefore the nadir of the system frequency is given by
\begin{equation}\label{eq:nadir-turbine}
||\bar w||_\infty = \frac{\left|\sum_i u_i \right|}{\sum_i f_i}\frac{1}{d+r^{-1}}\left(1+\sqrt{\frac{\tau r^{-1}}{m} }e^{\!-\!\frac{\eta}{\omega_d}\left(\!\phi+\frac{\pi}{2}\!\right)}\right)
\end{equation}
\end{proof}

\section{Proof of Proposition \ref{prop.dNadirdm}}\label{app.nadir.2}

The proof of this proposition requires the following two lemmas.

\begin{lemma}\label{lem:dphidm}
Given a power system under Assumption \ref{ass.scale} with $g_i(s)$ given by~\eqref{eq.giturbine}, the derivative of $\phi$ with respect to $m$ is given by
\[
\frac{\partial \phi}{\partial m}=\frac{m+d\tau}{2m\sqrt{4r^{-1}m\tau-(m-d\tau)^2}}
\]
\end{lemma}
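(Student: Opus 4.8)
The plan is to obtain $\partial\phi/\partial m$ by implicitly differentiating the defining relation \eqref{eq:sinphi} for $\phi$, namely $\sin\phi=(m-d\tau)/(2\sqrt{m\tau r^{-1}})$, and then eliminating the resulting $\cos\phi$ factor using the explicit expression \eqref{eq:cosphi}. Throughout, I would note that the under-damped condition \eqref{eq:omegad} is \emph{precisely} equivalent to $4r^{-1}m\tau-(m-d\tau)^2>0$ (multiply $\omega_d^2>0$ through by $4m^2\tau^2$ and rearrange), so every square root below is real and, since $\phi\in(-\tfrac{\pi}{2},\tfrac{\pi}{2})$, we have $\cos\phi>0$; this guarantees the claimed formula is well-defined on the whole under-damped regime.

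First I would differentiate $\sin\phi=(m-d\tau)/(2\sqrt{m\tau r^{-1}})$ with respect to $m$. The left-hand side gives $\cos(\phi)\,\partial\phi/\partial m$. For the right-hand side it is cleanest to write it as $\tfrac{1}{2}(\tau r^{-1})^{-1/2}\bigl(m^{1/2}-d\tau\,m^{-1/2}\bigr)$ and differentiate term by term, which yields $(m+d\tau)/\bigl(4(\tau r^{-1})^{1/2}m^{3/2}\bigr)$.

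Next I would divide by $\cos\phi$, substituting its explicit value from \eqref{eq:cosphi}, which after clearing the denominator reads $\cos\phi=\sqrt{4r^{-1}m\tau-(m-d\tau)^2}/\bigl(2\sqrt{m\tau r^{-1}}\bigr)$. The remaining step is purely algebraic: the factors $(\tau r^{-1})^{1/2}$ cancel and the powers of $m$ combine as $2\sqrt{m\tau r^{-1}}/\bigl(4(\tau r^{-1})^{1/2}m^{3/2}\bigr)=1/(2m)$, leaving exactly $\partial\phi/\partial m=(m+d\tau)/\bigl(2m\sqrt{4r^{-1}m\tau-(m-d\tau)^2}\bigr)$, as claimed.

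There is no genuine obstacle here beyond careful bookkeeping of the half-integer powers of $m$ and the constant $(\tau r^{-1})^{1/2}$ to confirm they collapse to $1/(2m)$. The only two points worth flagging explicitly are to justify dividing by $\cos\phi$ (positive on the stated interval) and to record the equivalence between \eqref{eq:omegad} and positivity of the radicand, so that the derivative is both legitimate and defined throughout the under-damped case.
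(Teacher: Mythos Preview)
Your argument is correct. Both you and the paper use implicit differentiation of a trigonometric defining relation for $\phi$, so the approaches are close cousins; the difference is that the paper first forms $\tan\phi=(m-d\tau)/\sqrt{4r^{-1}m\tau-(m-d\tau)^2}$ from \eqref{eq:sinphi}--\eqref{eq:cosphi}, differentiates that, and then multiplies by $\cos^2\phi$ (via $\phi=\arctan(\tan\phi)$) to recover $\partial\phi/\partial m$, whereas you differentiate the original relation $\sin\phi=(m-d\tau)/(2\sqrt{m\tau r^{-1}})$ directly and divide by $\cos\phi$. Your route is slightly more economical: it avoids assembling $\tan\phi$ and the extra $\arctan$ chain-rule step, and the cancellation of the $(\tau r^{-1})^{1/2}$ factors is immediate. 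The paper's route has the minor conceptual point that one never needs a closed form for $\phi$ itself, only for $\tan\phi$, but in substance the two computations are equivalent and of the same difficulty. Your explicit remarks that $\cos\phi>0$ on $(-\tfrac{\pi}{2},\tfrac{\pi}{2})$ and that \eqref{eq:omegad} is equivalent to positivity of the radicand are good hygiene and make the division and the square root legitimate throughout the under-damped regime.
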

\begin{proof}
Notice that while it is not possible to derive a closed form condition for $\phi$ in terms of the system parameters without the aid of a trigonometric function, it is possible to achieve such expression for $\frac{\partial \phi}{\partial m}$. This is achieved by first computing
\begin{align*}
\frac{\partial}{\partial m}\tan(\phi(m))&=\frac{\partial}{\partial m}\left( \frac{(m\!-\!d\tau)}{\sqrt{4r^{\!-\!1}m\tau\!-\!(m\!-\!d\tau)^2}} \right) \\
&=\frac{2r^{\!-\!1}\tau(m+d\tau)}{(4r^{\!-\!1}m\tau\!-\!(m\!-\!d\tau)^2)^\frac{3}{2}}
\end{align*}

Now using the fact that $\phi\in(-\frac{\pi}{2},\frac{\pi}{2})$ we get
\begin{align*}
\frac{\partial }{\partial m}\phi(m) &= \frac{\partial}{\partial m}\arctan\left(\tan (\phi(m))\right)\\
&=\left(\frac{\partial}{\partial x}\arctan(x)\big|_{x=\tan(\phi)}\right)
\frac{\partial}{\partial m}\tan(\phi(m))\\
&={\cos(\phi)^2} \frac{\partial}{\partial m}\tan(\phi(m))\\
&=\left(1-\frac{(m-d\tau)^2}{4r^{-1}m\tau}\right)\frac{\partial}{\partial m}\tan(\phi(m))\\
&=\left(1-\frac{(m-d\tau)^2}{4r^{-1}m\tau}\right)
\frac{2r^{\!-\!1}\tau(m+d\tau)}{(4r^{\!-\!1}m\tau\!-\!(m\!-\!d\tau)^2)^\frac{3}{2}}\\
&=\frac{m+d\tau}{2m\sqrt{4r^{-1}m\tau-(m-d\tau)^2}}
\end{align*}
\end{proof}

\begin{lemma}\label{lem:detaomega-dm}
Given a power system under Assumption \ref{ass.scale} with $g_i(s)$ given by~\eqref{eq.giturbine}, the derivative of $\frac{\eta}{\omega_d}$ with respect to $m$ is given by
\[
\frac{\partial}{\partial m}\left( \frac{\eta}{\omega_d}\right)
=
\frac{
2\tau(d+ r^{-1})(m-d\tau)
}{
({4m\tau r^{-1}-\left(m-d\tau\right)^2})^\frac{3}{2}
}
\]
\end{lemma}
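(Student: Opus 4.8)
The plan is to reduce the ratio $\eta/\omega_d$ to an explicit closed form in $m$ and then differentiate directly; the whole computation becomes short once a cancellation is spotted \emph{before} differentiating. First I would rewrite $\eta$ and $\omega_d$ from \eqref{eq:eta-gamma} and \eqref{eq:omegad} over a common denominator. Putting $\eta = \frac{1}{2}\left(\frac{1}{\tau}+\frac{d}{m}\right) = \frac{m+d\tau}{2m\tau}$ and expanding the definition of $\omega_d^2$ gives, after clearing denominators,
\[
\omega_d^2 = \frac{4m\tau r^{-1} - (m-d\tau)^2}{4m^2\tau^2},
\]
so that $\omega_d = \frac{\sqrt{4m\tau r^{-1}-(m-d\tau)^2}}{2m\tau}$. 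This is the same radical already appearing in Lemma \ref{lem:dphidm} and, through $\cos\phi$, in \eqref{eq:cosphi}. The crucial observation is that the factor $2m\tau$ cancels in the quotient, leaving the clean expression
\[
\frac{\eta}{\omega_d} = \frac{m+d\tau}{\sqrt{4m\tau r^{-1}-(m-d\tau)^2}}.
\]

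Next I would differentiate this last expression. Writing $P(m) := m+d\tau$ and $S(m) := 4m\tau r^{-1} - (m-d\tau)^2$, so that $\eta/\omega_d = P\,S^{-1/2}$, the product rule yields
\[
\frac{\partial}{\partial m}\left(\frac{\eta}{\omega_d}\right) = S^{-3/2}\left(S - \frac{1}{2}P\,S'\right),
\]
with $P' = 1$ and $S' = 4\tau r^{-1} - 2(m-d\tau)$.

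The one place where care is needed is the simplification of the bracket $S - \frac{1}{2}P\,S'$. Expanding $S = 4m\tau r^{-1} - m^2 + 2md\tau - d^2\tau^2$ and $\frac{1}{2}P\,S' = 2\tau r^{-1}(m+d\tau) - (m^2 - d^2\tau^2)$, the $-m^2$ terms cancel and the remaining terms collect, after grouping the factor $m(r^{-1}+d)-d\tau(d+r^{-1})$, into
\[
S - \frac{1}{2}P\,S' = 2\tau(d+r^{-1})(m-d\tau).
\]
Substituting this back into the displayed derivative gives exactly the claimed formula, since $S^{3/2} = (4m\tau r^{-1}-(m-d\tau)^2)^{3/2}$. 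There is no genuine obstacle beyond this bookkeeping; the only real subtlety is noticing the $2m\tau$ cancellation at the outset, which turns what otherwise looks like a cumbersome derivative of a quotient of square roots into a two-line calculation.
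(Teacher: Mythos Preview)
Your proof is correct and follows essentially the same approach as the paper: both begin from the closed form $\eta/\omega_d = (m+d\tau)/\sqrt{4m\tau r^{-1}-(m-d\tau)^2}$ and differentiate directly, arriving at the same numerator $2\tau(d+r^{-1})(m-d\tau)$ after expansion. Your use of the shorthand $P$, $S$ and the preliminary cancellation of $2m\tau$ makes the bookkeeping slightly cleaner, but the substance is identical.
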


\begin{proof}
The proof is just by direct computation
\begin{align*}
&\frac{\partial}{\partial m}\left( \frac{\eta}{\omega_d}\right)
=\frac{\partial}{\partial m}\left(\frac{m+d\tau}{\sqrt{4m\tau r^{\!-\!1}\!-\!\left(m\!-\!d\tau\right)^2}}\right)\\
&=
\frac{
{4m\tau r^{\!-\!1}\!-\!\left(m\!-\!d\tau\right)^2}
\!-\!(2\tau r^{\!-\!1}\!-\!m+d\tau)(m+d\tau)
}{
\sqrt{(4m\tau r^{\!-\!1}\!-\!\left(m\!-\!d\tau\right)^2)}({4m\tau r^{\!-\!1}\!-\!\left(m\!-\!d\tau\right)^2})
}\\
&=
\frac{
4m\tau r^{\!-\!1} \!-\! 2\tau r^{\!-\!1}(m+d\tau) \!-\! (m\!-\!d\tau)^2+m^2 \!-\! (d\tau)^2
}{
({4m\tau r^{\!-\!1}\!-\!\left(m\!-\!d\tau\right)^2})^\frac{3}{2}
}\\
&=
\frac{
2m\tau r^{\!-\!1} \!-\! 2(r^{\!-\!1}\tau)(d\tau) +2md\tau \!-\! 2(d\tau)^2
}{
({4m\tau r^{\!-\!1}\!-\!\left(m\!-\!d\tau\right)^2})^\frac{3}{2}
}\\
&=
\frac{
2m\tau(d+ r^{\!-\!1}) \!-\! 2(d\tau)\tau(d+r^{\!-\!1})
}{
({4m\tau r^{\!-\!1}\!-\!\left(m\!-\!d\tau\right)^2})^\frac{3}{2}
}\\
&=
\frac{
2\tau(d+ r^{\!-\!1})(m\!-\!d\tau)
}{
({4m\tau r^{\!-\!1}\!-\!\left(m\!-\!d\tau\right)^2})^\frac{3}{2}
}
\end{align*}
\end{proof}

We are now ready to prove Proposition \ref{prop.dNadirdm}.

\begin{proof}
Using lemmas \ref{lem:dphidm} and \ref{lem:detaomega-dm}, and $\alpha=\frac{\pi}{2}+\phi$ we can compute
\begin{align*}
&\frac{\partial }{\partial m}\left[\left(\frac{d\!+\!r^{\!-\!1}}{\sqrt{\tau r^{\!-\!1}}}\right)||g_0||_\infty\right]=
\frac{\partial }{\partial m}\left[
\left(\frac{1}{\sqrt{\tau r^{\!-\!1}}}\!+\!
\frac{1}{\sqrt{m}}
e^{\!-\!\frac{\eta}{\omega_d}\alpha}\right)
\right]\\
&=e^{\!-\!\frac{\eta}{\omega_d}\alpha}
\left(
\frac{\partial}{\partial m}\left(\frac{1}{\sqrt{m}}\right)
\!-\!
\frac{1}{\sqrt{m}}
\left(
\frac{\eta}{\omega_d}\frac{\partial \phi}{\partial m} \!+\!\alpha\frac{\partial}{\partial m}\left(\frac{\eta}{\omega_d}\right)
\right)
\right)\\
&=e^{\!-\!\frac{\eta}{\omega_d}\alpha}
\left(
\left(\frac{\!-\!\frac{1}{2}m^{\!-\!\frac{1}{2}}}{{m}}\right)
\!-\!
\frac{1}{\sqrt{m}}
\left(
\frac{\eta}{\omega_d}\frac{\partial \phi}{\partial m} \!+\! \alpha\frac{\partial}{\partial m}\left(\frac{\eta}{\omega_d}\right)
\right)
\right)\\
&=\frac{1}{d\!+\!r^{\!-\!1}}\sqrt{\frac{\tau r^{\!-\!1}}{m}}
\frac{e^{\!-\!\frac{\eta}{\omega_d}\alpha}}{2}
\!\left(\!
\left(\frac{\!-\!1}{{m}}\right)
\!-\!
2
\left(\!
\frac{\eta}{\omega_d}\frac{\partial \phi}{\partial m} \!+\! \alpha\frac{\partial}{\partial m}\left(\!\frac{\eta}{\omega_d}\!\right)
\!\right)
\!\right)\\
&=
\frac{\sqrt{\tau r^{-1}}e^{\!-\!\frac{\eta}{\omega_d}\alpha}}
{2\sqrt{m}(d\!+\!r^{\!-\!1})}
\left(
\left(\frac{\!-\!1}{{m}}\right)
\!-\!
2
\left(
\frac{\eta}{\omega_d}\frac{m\!+\!d\tau}{2m\sqrt{4r^{\!-\!1}m\tau\!-\!(m\!-\!d\tau)^2}} \! \right.\right.\\
&\qquad\left.\left.+\! \alpha\frac{2\tau(d\!+\! r^{\!-\!1})(m\!-\!d\tau)}{({4m\tau r^{\!-\!1}\!-\!\left(m\!-\!d\tau\right)^2})^\frac{3}{2}}
\right)
\right)\\
&=\frac{\sqrt{\tau r^{-1}}e^{\!-\!\frac{\eta}{\omega_d}\alpha}}
{2m\sqrt{m}(d\!+\!r^{\!-\!1})}
\left(\!-\!1 \!-\!
\left(
\frac{\eta}{\omega_d}\frac{m\!+\!d\tau}{\sqrt{4r^{\!-\!1}m\tau\!-\!(m\!-\!d\tau)^2}} \right.\right.\\
&\qquad\left.\left. +\! 2m\alpha
\frac{2\tau(d\!+\! r^{\!-\!1})(m\!-\!d\tau)}{({4m\tau r^{\!-\!1}\!-\!\left(m\!-\!d\tau\right)^2})^\frac{3}{2}}
\right)
\right)\\
&=\frac{1}{d\!+\!r^{\!-\!1}}\sqrt{\frac{\tau r^{\!-\!1}}{m}}
\frac{e^{\!-\!\frac{\eta}{\omega_d}\alpha}}{2m}
\Bigg(\!-\!1  \!-\!\Bigg(\frac{\left(m\!+\!d\tau\right)}{\sqrt{4m\tau r^{\!-\!1}\!-\!\left(m\!-\!d\tau\right)^2}}
\\
&\quad\frac{m\!+\!d\tau}{\sqrt{4r^{\!-\!1}m\tau\!-\!(m\!-\!d\tau)^2}} \!+\! 2m\alpha
\frac{2\tau(d\!+\! r^{\!-\!1})(m\!-\!d\tau)}{({4m\tau r^{\!-\!1}\!-\!\left(m\!-\!d\tau\right)^2})^\frac{3}{2}}
\Bigg)
\Bigg)\\
&=\frac{1}{d\!+\!r^{\!-\!1}}\sqrt{\frac{\tau r^{\!-\!1}}{m}}
\frac{e^{\!-\!\frac{\eta}{\omega_d}\alpha}}{2m} \Bigg(\!-\!1 \!-\!
\frac{(m\!+\!d\tau)^2}
{{4m\tau r^{\!-\!1}\!-\!(m\!-\!d\tau)^2}}\\
&\quad \!-\!\alpha
\frac{4m\tau(d\!+\! r^{\!-\!1})(m\!-\!d\tau)}{({4m\tau r^{\!-\!1}\!-\!(m\!-\!d\tau)^2})^\frac{3}{2}}
\Bigg)<0\\
\end{align*}
\end{proof}

\section{Proof of Proposition \ref{prop.rocof}}\label{app.rocof}

The poof of this proposition requires the computation of the following lemma.
\begin{lemma}
Given $g_0(s)$ as in \eqref{eq.giturbine}
\[
\ddot{g}_0(t) = -\frac{d}{m^2}e^{-\eta t}\frac{\cos(\omega_d t-\beta)}{\cos(\beta)}
\]
where the angle $\beta\in(-\frac{\pi}{2},\frac{\pi}{2})$, $\beta>\phi$ for $\phi\in(-\frac{\pi}{2},\frac{\pi}{2})$ defined by \eqref{eq:cosphi} and \eqref{eq:sinphi},  and $\beta$ is uniquely defined by
\[
\cos(\beta) =\frac{\omega_d}{ \sqrt{\omega_d^2+\left(\frac{1}{\tau}-\eta+\frac{r^{-1}}{d\tau}\right)^2} }
\]
and
\[
\sin(\beta) =\frac{\frac{1}{\tau}-\eta+\frac{r^{-1}}{d\tau}}{ \sqrt{\omega_d^2+\left(\frac{1}{\tau}-\eta+\frac{r^{-1}}{d\tau}\right)^2} }.
\]
\end{lemma}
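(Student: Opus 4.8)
The plan is to obtain $\ddot g_0$ directly in the Laplace domain rather than differentiating the time expression \eqref{eq:g0-turbine} twice, because the phase shift $\beta$ then emerges from a single partial-fraction step. The differentiation rule gives $\mathcal L\{\ddot g_0\}(s) = s^2 g_0(s) - s\,g_0(0^+) - \dot g_0(0^+)$. From \eqref{eq:g0-turbine} one reads off $g_0(0^+)=0$, and either the initial-value theorem applied to \eqref{eq.giturbine}, giving $\dot g_0(0^+) = \lim_{s\to\infty} s^2 g_0(s) = 1/m$, or \eqref{eq:dot-g0-turbine} at $t=0$ (where $\sec\phi\cos\phi=1$), fixes $\dot g_0(0^+)=1/m$; this is also the RoCoF value. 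Hence $\mathcal L\{\ddot g_0\}(s) = s^2 g_0(s) - 1/m$.

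The key algebraic step is to combine this over a common denominator. Writing $P(s) = m\tau s^2 + (m+d\tau)s + d + r^{-1}$ for the quadratic factor of \eqref{eq.giturbine}, the numerator of $s^2 g_0(s) - 1/m$ is $m s(\tau s + 1) - P(s)$, which collapses by a one-line cancellation to $-(d\tau s + d + r^{-1})$. Since $P(s) = m\tau[(s+\eta)^2 + \omega_d^2]$ by the definitions \eqref{eq:omegad} and \eqref{eq:eta-gamma}, factoring $d\tau$ out of the numerator yields the compact form $\mathcal L\{\ddot g_0\}(s) = -\frac{d}{m^2}\,\frac{s + \frac1\tau + \frac{r^{-1}}{d\tau}}{(s+\eta)^2 + \omega_d^2}$.

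Next I would split the numerator as $(s+\eta) + (\frac1\tau - \eta + \frac{r^{-1}}{d\tau})$ and invert with the standard pairs for $\frac{s+\eta}{(s+\eta)^2+\omega_d^2}$ and $\frac{\omega_d}{(s+\eta)^2+\omega_d^2}$, giving $\ddot g_0(t) = -\frac{d}{m^2}e^{-\eta t}\bigl(\cos(\omega_d t) + \tfrac{C}{\omega_d}\sin(\omega_d t)\bigr)$ with $C := \frac1\tau - \eta + \frac{r^{-1}}{d\tau}$. Collapsing the cosine-plus-sine into a single harmonic $R\cos(\omega_d t - \beta)$ forces $R\cos\beta = 1$ and $R\sin\beta = C/\omega_d$; reading off $R = 1/\cos\beta$ with $\cos\beta = \omega_d/\sqrt{\omega_d^2 + C^2}$ and $\sin\beta = C/\sqrt{\omega_d^2+C^2}$ reproduces exactly the claimed formulas and the asserted shape $-\frac{d}{m^2}e^{-\eta t}\cos(\omega_d t - \beta)/\cos\beta$.

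Finally, for $\beta > \phi$, I would note from \eqref{eq:sinphi}--\eqref{eq:cosphi} that $\tan\phi = (\frac1\tau-\eta)/\omega_d$, whereas $\tan\beta = C/\omega_d = \tan\phi + \frac{r^{-1}}{d\tau\,\omega_d}$. Since $r^{-1}, d, \tau, \omega_d > 0$ the increment is strictly positive, and as $\tan$ is strictly increasing on $(-\frac\pi2,\frac\pi2)$ with both angles lying there, $\beta > \phi$ follows. I expect the only delicate points to be the numerator cancellation and the bookkeeping of $\dot g_0(0^+)=1/m$; everything afterward is routine Laplace inversion and one phase-combining identity. An alternative route, differentiating \eqref{eq:dot-g0-turbine} and setting $\beta = \phi + \psi$ with $\tan\psi = \omega_d/\eta$, also works but requires more trigonometric bookkeeping to match the explicit $\cos\beta$.
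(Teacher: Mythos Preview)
Your proof is correct and follows essentially the same route as the paper: compute $\mathcal L\{\ddot g_0\}(s)=s^2 g_0(s)-1/m$, reduce the resulting numerator to $-(d\tau s + d + r^{-1})$ over $mP(s)=m^2\tau[(s+\eta)^2+\omega_d^2]$, invert via the standard damped-sinusoid pairs, and collapse the cosine--sine combination into a single phase-shifted cosine. Your explicit check that $\beta>\phi$ via $\tan\beta=\tan\phi+\frac{r^{-1}}{d\tau\,\omega_d}>\tan\phi$ is a small bonus the paper's own proof leaves unstated.
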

\begin{proof}
We first compute
\begin{align}
s^2g_0(s)&=\frac{1}{m\tau}\frac{\tau s^2 + s}{s^2+\left(\frac{1}{\tau}+\frac{d}{m}\right)s+\frac{d+r^{-1}}{m\tau}}\\
&=\frac{1}{m\tau}\left(\tau - \frac{\frac{d\tau}{m}s+\frac{d+r^{-1}}{m}}{s^2+\left(\frac{1}{\tau}+\frac{d}{m}\right)s+\frac{d+r^{-1}}{m\tau}}\right)\\
&=\frac{1}{m\tau}\left(\tau - h(s)\right)
\end{align}
where
\[
h(s) =\frac{\frac{d\tau}{m}s+\frac{d+r^{-1}}{m}}{s^2+\left(\frac{1}{\tau}+\frac{d}{m}\right)s+\frac{d+r^{-1}}{m\tau}}.
\]

Thus we can compute
\begin{align*}
\ddot g_0(t)&=\mathcal L^{-1}\left[ s^2g_0(s)-sg_0(t)|_{t=0^+}-\dot g_0(t)|_{t=0^+}\right]\\
&=\mathcal L^{-1}\left[ \frac{1}{m\tau}(\tau-h(s)) - \frac{1}{m}\right]\\
&=-\frac{1}{m\tau}\mathcal L^{-1}\left[h(s)\right]
\end{align*}

Therefore, it is enough to compute
\begin{align*}
&h(t)=\mathcal L^{\!-\!1}\left[ h(s)\right]\\
&=\mathcal L^{\!-\!1}\left[\frac{\frac{d\tau}{m}s\!+\!\frac{d\!+\!r^{\!-\!1}}{m}}{s^2\!+\!\left(\frac{1}{\tau}\!+\!\frac{d}{m}\right)s\!+\!\frac{d\!+\!r^{\!-\!1}}{m\tau}}\right]\\
&= \mathcal L^{\!-\!1}\left[
\frac{\frac{d\tau}{m}s\!+\!\frac{d\!+\!r^{\!-\!1}}{m}}{ (s\!+\!\eta)^2 \!+\!\omega_d^2}
\right]\\
&=e^{\!-\!\eta t}\left(\frac{d\tau}{m}\cos(\omega_d t) \!+\! \frac{\frac{d\!+\!r^{\!-\!1}}{m}\!-\!\eta\frac{d\tau}{m}}{\omega_d}\sin(\omega_d t)\right)\\
&=\frac{d\tau}{m\omega_d}e^{\!-\!\eta t}\left( \omega_d \cos(\omega_dt)\!+\!\left( \frac{1}{\tau}\!-\!\eta \!+\!\frac{r^{\!-\!1}}{d\tau}\right)\sin(\omega_dt)\right)\\
&=\frac{d\tau}{m\omega_d}e^{\!-\!\eta t}\left( \omega_d \cos(\omega_dt)\!+\!\left( \frac{1}{\tau}\!-\!\eta \!+\!\frac{r^{\!-\!1}}{d\tau}\right)\sin(\omega_dt)\right)\\
&=\frac{d\tau}{m}e^{\!-\!\eta t}\frac{1}{\cos(\beta)}\left(\cos(\beta)\cos(\omega_dt)\!+\!\sin(\beta)\sin(\omega_dt)\right)\\
&=\frac{d\tau}{m}e^{\!-\!\eta t}\frac{\cos(\omega_d t\!-\!\beta)}{\cos(\beta)}
\end{align*}

\end{proof}

We are now ready to prove Proposition \ref{prop.rocof}

\begin{proof}
When $t=0^+$, $\ddot g_0(0)<0$. Therefore, the initial trend the $\dot{g_0}(t)$ is decreasing and therefore
\[
\dot{g_0}(0)=
\frac{1}{m}\sqrt{1+(\tan(\phi))^2}e^{0}\cos(0-\phi) = \frac{1}{m}
\]
is a local maximum.

We will now show that this is indeed a global maximum.
The function $\ddot g_0(t)$ crosses zero for the first time when $\omega_d t-\beta=\frac{\pi}{2}$ or equivalently
\begin{equation}\label{eq:t_star}
t^* = \frac{\beta+\frac{\pi}{2}}{\omega_d}
\end{equation}

By substituting \eqref{eq:t_star} into \eqref{eq:dot-g0-turbine} and
defining
\begin{align*}
\Delta&=\sqrt{\omega_d^2 + \left(\frac{1}{\tau}\!-\!\eta+\frac{r^{-1}}{d\tau}\right)^2}
\end{align*}
we get
\begin{align*}
&\dot{g}_0(t^*) = \frac{1}{m}\sqrt{1+(\tan(\phi))^2}e^{-\eta t^*}
\cos(\omega_dt^*-\phi)\\
&=\frac{1}{m}e^{-\frac{\eta}{\omega_d}(\beta +\frac{\pi}{2})}\frac{\cos(\frac{\pi}{2}+(\beta-\phi))}{\cos(\phi)}\\
&=-\frac{1}{m}e^{-\frac{\eta}{\omega_d}(\beta +\frac{\pi}{2})}\frac{\sin(\beta-\phi)}{\cos(\phi)}\\
&=-\frac{1}{m}e^{-\frac{\eta}{\omega_d}(\beta +\frac{\pi}{2})}\frac{\sin(\beta)\cos(\phi)-\cos(\beta)\sin(\phi)}{\cos(\phi)}\\
&=-\frac{1}{m}e^{-\frac{\eta}{\omega_d}(\beta +\frac{\pi}{2})}
\left(\sin(\beta)-\cos(\beta)\tan(\phi)\right)\\
&=-\frac{1}{m}e^{-\frac{\eta}{\omega_d}(\beta +\frac{\pi}{2})}
\left(
\frac{\left(\frac{1}{\tau}-\eta+\frac{r^{-1}}{d\tau}\right)}
{\Delta}
-
\frac{\omega_d}
{\Delta}
\frac{\frac{1}{2}\left(\frac{ 1}{\tau}-\frac{d}{m}\right)}{\omega_d}
\right)\\
&=-\frac{1}{m}e^{-\frac{\eta}{\omega_d}(\beta +\frac{\pi}{2})}
\left(
\frac{\left(\frac{1}{2}\left(\frac{ 1}{\tau}-\frac{d}{m}\right) +\frac{r^{-1}}{d\tau}\right)}
{\Delta}
-
\frac{\frac{1}{2}\left(\frac{ 1}{\tau}-\frac{d}{m}\right) }
{\Delta}
\right)\\
&=-\frac{1}{m}e^{-\frac{\eta}{\omega_d}(\beta +\frac{\pi}{2})}
\left(
\frac{\frac{r^{-1}}{d\tau}}
{\Delta}
\right)
\end{align*}
Finally by simplifying
\begin{align*}
\Delta&=\sqrt{\omega_d^2 + \left(\frac{1}{\tau}\!-\!\eta+\frac{r^{-1}}{d\tau}\right)^2}\\
&=
{\sqrt{   {{\frac{d+r^{-1}}{m\tau}\!-\!\frac{1}{4}\left(\frac{1}{\tau} +\frac{d}{m}\right)^2}}   + \left(\frac{1}{2}\left(\frac{1}{\tau}\!-\!\frac{d}{m}\right)+\frac{r^{-1}}{d\tau}\right)^2}}\\
&=
{\sqrt{ \frac{r^{-1}}{m\tau}  + 2\frac{1}{2}\left(\frac{1}{\tau}\!-\!\frac{d}{m}\right)\frac{r^{-1}}{d\tau}+     \left(\frac{r^{-1}}{d\tau}\right)^2}}\\
&={\sqrt{ \frac{r^{-1}}{d\tau}\frac{1}{\tau}+     \left(\frac{r^{-1}}{d\tau}\right)^2}}
\end{align*}
we get
\begin{align*}
&\dot{g}_0(t^*) =-\frac{1}{m}e^{-\frac{\eta}{\omega_d}(\beta +\frac{\pi}{2})}
\left(
\frac{\frac{r^{-1}}{d\tau}}
{\sqrt{ \frac{r^{-1}}{d\tau}\frac{1}{\tau}+     \left(\frac{r^{-1}}{d\tau}\right)^2}}
\right)>-\frac{1}{m}
\end{align*}
Therefore, since for any additional instant of time that $\dot{g}_0(t)$ achieves a local extremum $t_k^*=\frac{\beta + \frac{\pi}{2} +k\pi}{\omega_d}$, the factor $|\cos(\omega_dt_k^*-\phi)|$ does not change, then the maximum is achieved at $t=0$.

\end{proof}

\bibliographystyle{IEEETran}
\bibliography{refs}

\begin{thebibliography}{10}
\providecommand{\url}[1]{#1}
\csname url@rmstyle\endcsname
\providecommand{\newblock}{\relax}
\providecommand{\bibinfo}[2]{#2}
\providecommand\BIBentrySTDinterwordspacing{\spaceskip=0pt\relax}
\providecommand\BIBentryALTinterwordstretchfactor{4}
\providecommand\BIBentryALTinterwordspacing{\spaceskip=\fontdimen2\font plus
\BIBentryALTinterwordstretchfactor\fontdimen3\font minus
  \fontdimen4\font\relax}
\providecommand\BIBforeignlanguage[2]{{%
\expandafter\ifx\csname l@#1\endcsname\relax
\typeout{** WARNING: IEEEtran.bst: No hyphenation pattern has been}%
\typeout{** loaded for the language `#1'. Using the pattern for}%
\typeout{** the default language instead.}%
\else
\language=\csname l@#1\endcsname
\fi
#2}}

\bibitem{Evans:1924hr}
R.~D. Evans and R.~C. Bergvall, ``{Experimental analysis of stability and power
  limitations},'' \emph{Journal of the A.I.E.E.}, vol.~43, no.~4, pp. 329--340,
  Apr. 1924.

\bibitem{Steinmetz:1920fg}
C.~P. Steinmetz, ``Power control and stability of electric generating
  stations,'' \emph{Journal of the A.I.E.E.}, vol.~39, no.~2, pp. 1215--1287,
  July 1920.

\bibitem{NERC:2015tc}
{North American Electric Reliability Corporation}, ``{2015 Frequency Response
  Annual Analysis},'' Tech. Rep., Sept. 2015.

\bibitem{FederalEnergyRegulatoryCommission:2012wq}
{Federal Energy Regulatory Commission} and {North American Electric Reliability
  Corporation}, ``{Arizona-Southern California Outages on September 8, 2011},''
  Tech. Rep., Apr. 2012.

\bibitem{Hsu:di}
Y.-Y. Hsu, S.-W. Shyue, and C.-C. Su, ``Low frequency oscillations in
  longitudinal power systems: {E}xperience with dynamic stability of {Taiwan}
  power system,'' \emph{IEEE Trans. on Power Systems}, vol.~2, no.~1, pp.
  92--98, Feb. 1987.

\bibitem{klein_fundamental_1991}
M.~Klein, G.~J. Rogers, and P.~Kundur, ``{A fundamental study of inter-area
  oscillations in power systems},'' \emph{IEEE Trans. on Power Systems},
  vol.~6, no.~3, pp. 914--921, 1991.

\bibitem{messina_inter-area_2011}
M.~M. Begovic, ``Inter-area oscillations in power systems: {A} nonlinear and
  nonstationary perspective (messina, a.r.) [book reviews],'' \emph{IEEE Power
  and Energy Magazine}, vol.~9, no.~2, pp. 76--77, 2011.

\bibitem{Miller:2011tm}
N.~W. Miller, M.~Shao, and S.~Venkataraman, ``{California ISO (CAISO) Frequency
  Response Study },'' General Electric, Tech. Rep., Nov. 2011.

\bibitem{Winkelman:1981}
J.~Winkelman, J.~Chow, B.~Bowler, B.~Avramovic, and P.~Kokotovic, ``An analysis
  of interarea dynamics of multi-machine systems,'' \emph{IEEE Transactions on
  Power Apparatus and Systems}, vol. PAS-100, no.~2, pp. 754--763, 1981.

\bibitem{Verghese:1982}
G.~Verghese, I.~P\'erez-Arriaga, and F.~Schewppe, ``Selective modal analysis
  with applications to electric power systems, part ii: the dynamic stability
  problem,'' \emph{IEEE Transactions on Power Apparatus and Systems}, vol.
  PAS-101, no.~9, pp. 3126--3134, 1982.

\bibitem{bassam}
E.~Tegling, B.~Bamieh, and D.~Gayme, ``The price of synchrony: Evaluating the
  resistive losses in synchronizing power networks,'' \emph{IEEE Transactions
  on Control of Network Systems}, vol.~2, no.~3, pp. 254--266, 2015.

\bibitem{mevsanovic2016comparison}
A.~Me{\v{s}}anovi{\'c}, U.~M{\"u}nz, and C.~Heyde, ``Comparison of
  $\mathcal{H}_\infty$, $\mathcal{H}_2$, and pole optimization for power system
  oscillation damping with remote renewable generation,''
  \emph{IFAC-PapersOnLine}, vol.~49, no.~27, pp. 103--108, 2016.

\bibitem{m2016cdc}
E.~Mallada, ``{iDroop: A dynamic droop controller to decouple power grid's
  steady-state and dynamic performance},'' in \emph{55th IEEE Conference on
  Decision and Control (CDC)}, 12 2016, pp. 4957--4964.

\bibitem{poolla_dorfler2017}
B.~K. Poolla, S.~Bolognani, and F.~Dorfler, ``Optimal placement of virtual
  inertia in power grids,'' \emph{IEEE Transactions on Automatic Control},
  2017.

\bibitem{simpson-porco2017}
M.~{Pirani}, J.~W. {Simpson-Porco}, and B.~{Fidan}, ``{System-Theoretic
  Performance Metrics for Low-Inertia Stability of Power Networks},''
  \emph{ArXiv e-prints}, Mar. 2017.

\bibitem{andreasson2017}
M.~{Andreasson}, E.~{Tegling}, H.~{Sandberg}, and K.~H. {Johansson},
  ``{Coherence in Synchronizing Power Networks with Distributed Integral
  Control},'' \emph{ArXiv e-prints}, Mar. 2017.

\bibitem{jpm2017cdc}
Y.~Jiang, R.~Pates, and E.~Mallada, ``{Performance tradeoffs of dynamically
  controlled grid-connected inverters in low inertia power systems},'' in
  \emph{56th IEEE Conference on Decision and Control (CDC)}, 2017.

\bibitem{kundur_power_1994}
P.~Kundur, \emph{{Power System Stability and Control}}.\hskip 1em plus 0.5em
  minus 0.4em\relax McGraw-Hill Professional, 1994.

\bibitem{bergen_vittal_2000}
A.~Bergen and V.~Vittal, \emph{{Power System Analsysis}}.\hskip 1em plus 0.5em
  minus 0.4em\relax Prentice-Hall, 2000.

\bibitem{oakridge2013}
\BIBentryALTinterwordspacing
G.~Kou, S.~W. Hadley, P.~Markham, and Y.~Liu, ``{Developing Generic Dynamic
  Models for the 2030 Eastern Interconnection Grid},'' Oak Ridge National
  Laboratory, Tech. Rep., Dec 2013. [Online]. Available:
  \url{http://www.osti.gov/scitech/}
\BIBentrySTDinterwordspacing

\bibitem{guggilam2017engineering}
S.~S. Guggilam, C.~Zhao, E.~Dall'Anese, Y.~C. Chen, and S.~V. Dhople,
  ``Engineering inertial and primary-frequency response for distributed energy
  resources,'' \emph{arXiv preprint arXiv:1706.03612}, 2017.

\bibitem{apostolopoulou2016balancing}
D.~Apostolopoulou, P.~W. Sauer, and A.~D. Dom{\'\i}nguez-Garc{\'\i}a,
  ``Balancing authority area model and its application to the design of
  adaptive agc systems,'' \emph{IEEE Transactions on Power Systems}, vol.~31,
  no.~5, pp. 3756--3764, 2016.

\bibitem{ZhouDG}
K.~Zhou, J.~Doyle, and K.~Glover, \emph{Robust and Optimal Control}.\hskip 1em
  plus 0.5em minus 0.4em\relax Prentice Hall, 1996.

\end{thebibliography}

\end{document}